  \theoremstyle{plain}
    \newtheorem{thm}{Theorem}[section]
    \newtheorem{prop}[thm]{Proposition}
    \newtheorem{subsec}[thm]{}
\theoremstyle{definition}
    \newtheorem{defn}[thm]{Definition}
        \newtheorem{remark}[thm]{Remark}
    \newtheorem{exam}[thm]{Example}
\theoremstyle{remark}
\title{}
\author{}
\date{}
\begin{document}

\title{Weighted relative Rota-Baxter operators on Leibniz algebras and Post-Leibniz algebra structures}
%\author{Apurba Das \\ Indian Institute of Technology, Kanpur 208016, Uttar Pradesh, India \\ (e-mail: apurbadas348@gmail.com)}
%\author{Apurba Das and Ashis Mandal\\Department of Mathematics and Statistics,
%Indian Institute of Technology,\\ Kanpur 208016, Uttar Pradesh, India.\\apurbadas348@gmail.com~~~
%amandal@iitk.ac.in}
%\email{apurbadas348@gmail.com}

\author{Apurba Das}
\address{Department of Mathematics,
Indian Institute of Technology, Kharagpur 721302, West Bengal, India.}
\email{apurbadas348@gmail.com}

%\author{Puja Mondal}
%\address{Acharya Girish Chandra Bose College, Kolkata 700009, West Bengal, India.}
%\email{pujam957@gmail.com}

\subjclass[2010]{17A32, 17A36, 17B56, 16S80.}
\keywords{Leibniz algebras, Rota-Baxter operators with weight, Cohomology, Deformations, Post-Leibniz \\ algebras.}

\begin{abstract}
Leibniz algebras are non-skewsymmetric analogue of Lie algebras.
In this paper, we consider weighted relative Rota-Baxter operators on Leibniz algebras. We define cohomology of such operators and as an application, we study their deformations. Finally, we introduce and study post-Leibniz algebras as the structure behind weighted relative Rota-Baxter operators.
\end{abstract}

\maketitle

\medskip

\medskip

\medskip

\medskip

%\noindent {\bf Keywords:} AssDer pair, Hochschild cohomology, Crossed extensions, Abelian extensions, Deformations, Homotopy derivations, Categorifications.

%{\bf 2020 Mathematics Subject Classification:} 

%{\bf Keywords:} 

%\noindent {\bf 2010 MSC classifications:} 16E40, 16S80, 16W25, 18N25, 18N40.
%$17$A$30,$ $17$B$55$.

\medskip

\medskip

\tableofcontents

\noindent
\thispagestyle{empty}

%\maketitle

\section{Introduction}\label{sec-1}
Rota-Baxter operators on algebras first appeared in the study of fluctuation theory in probability \cite{rota}. Later, they were found important applications in splitting of algebras \cite{bai-operad}, renormalizations in quantum field theory \cite{connes}, combinatorial study of rooted trees \cite{cartier}, noncommutative analogue of Poisson geometry \cite{uchino}, also in many areas of mathematics and mathematical physics. Relative Rota-Baxter operators are generalization of Rota-Baxter operators in the presence of a representation of the underlying algebra. Recenely, cohomology and deformation theory of relative Rota-Baxter operators are studied in \cite{tang,das-rota}. Relative Rota-Baxter operators on Leibniz algebras, their deformation theory, and relation with Leibniz Yang-Baxter equation, Leibniz bialgebras are extensively considered in \cite{sheng-pl,sheng-defr}. Such operators induce Leibniz-dendriform algebras (also called pre-Leibniz algebras) which are split object for Leibniz algebras.

\medskip

Our aim in this paper is to consider weighted relative Rota-Baxter operators on Leibniz algebras. Let $\mathfrak{g}, \mathfrak{h}$ be two Leibniz algebras and let $\mathfrak{h}$ be a Leibniz $\mathfrak{g}$-representation. A linear map $T : \mathfrak{h} \rightarrow \mathfrak{g}$ is said to be a $\lambda$-weighted relative Rota-Baxter operator if $T$ satisfies some identity that involves linear and quadratic expressions of $T$ (see Definition \ref{defn-wrrb-l}). Here $\lambda \in {\bf k}$ is a fixed scalar. We construct a differential graded Lie algebra whose Maurer-Cartan elements are precisely $\lambda$-weighted relative Rota-Baxter operators. We observe that a $\lambda$-weighted relative Rota-Baxter operator $T$ induces a new Leibniz algebra structure on $\mathfrak{h}$, denoted by $\mathfrak{h}_T$. There is a representation of the Leibniz algebra $\mathfrak{h}_T$ on the vector space $\mathfrak{g}$. The corresponding Leibniz cohomology groups are called the cohomology of $T$. As applications of the cohomology, we study formal and finite order deformations of $T$. We observe that the linear term (coefficients of the formal parameter $t$) in a formal deformation is a $1$-cocycle in the cohomology complex of $T$, whose cohomology class depends only on the equivalence class of the deformation. Given a fnite order deformation of $T$, we associate a second cohomology class which is the obstruction for the extensibility of the deformation.

\medskip

In the last part, we introduce a notion of post-Leibniz algebras. They are non-skewsymmetric analogue of post-Lie algebras introduced by Vallette \cite{vallette}. Further, any pre-Leibniz algebra considered in \cite{sheng-pl} is a particular type of post-Leibniz algebras. Any $\lambda$-weighted relative Rota-Baxter operator $T: \mathfrak{h} \rightarrow \mathfrak{g}$ induces a post-Leibniz algebra structure on $\mathfrak{h}$. We further show that a post-Leibniz algebra gives rise to a Leibniz algebra structure on the underlying vector space, called the total Leibniz algebra. We also find a necessaray and sufficient for a Leibniz algebra to have a compatible post-Leibniz algebra structure.

\medskip

The paper is organized as follows. In Section \ref{sec-2}, we recall some basic definitions and notations regarding Leibniz algebras. We define $\lambda$-weighted relative Rota-Baxter operators and study some basic properties in Section \ref{sec-3}. Cohomology and deformations of $\lambda$-weighted relative Rota-Baxter operators are respectively considered in Section \ref{sec-4} and \ref{sec-5}. Finally, post-Leibniz algebras are defined and studied in Section \ref{sec-6}.

\medskip

All vector spaces, linear and multilinear maps, tensor products are over a field {\bf k} of characteristic $0$. Throughout the paper, $\lambda \in {\bf k}$ is a fixed scalar.

\section{Background on Leibniz algebras}\label{sec-2}
In this section, we recall Leibniz algebras and their cohomology with coefficients in a representation. Our main references are \cite{bala-operad,loday-pira}.

\begin{defn}\label{defn-leibniz-algebra}
A {\bf Leibniz algebra} is a pair $(\mathfrak{g}, [~,~]_\mathfrak{g})$ consists of a vector space $\mathfrak{g}$ together with a bilinear bracket $[~,~]_\mathfrak{g} : \mathfrak{g} \otimes \mathfrak{g} \rightarrow \mathfrak{g}$ satisfying the following Leibniz identity
\begin{align}\label{leibniz-identity}
[x, [ y, z]_\mathfrak{g} ]_\mathfrak{g} = [[x,y]_\mathfrak{g}, z ]_\mathfrak{g} + [ y, [x, z]_\mathfrak{g} ]_\mathfrak{g}, ~ \text{ for } x, y, z \in \mathfrak{g}.
\end{align}
Sometimes we denote a Leibniz algebra as above simply by $\mathfrak{g}$ when the bracket is understood.
\end{defn}

\begin{remark}
(i) The identity (\ref{leibniz-identity}) is equivalent to the fact that the left multiplication map $[x, -]_\mathfrak{g}$ is a derivation for the bracket $[~,~]_\mathfrak{g}$ on $\mathfrak{g}$. Thus, the Leibniz algebras defined in Definition \ref{defn-leibniz-algebra} are also called left Leibniz algebras. The definition of right Leibniz algebras can be similarly defined. Throughout this paper, we shall always work on left Leibniz algebras.

(ii) Any Lie algebra is always a Leibniz algebra. In the presence of skew-symmetry, the Jacobi identity is equivalent to the identity (\ref{leibniz-identity}). Thus, Leibniz algebras can be thought of as a non-skewsymmetric analogue of Lie algebras.
\end{remark}

\begin{defn}
Let $(\mathfrak{g}, [~,~]_\mathfrak{g})$ be a Leibniz algebra. A {\bf representation} of it consists of a triple $(V, \rho^L, \rho^R)$ of a vector space $V$ together with two bilinear maps $\rho^L : \mathfrak{g} \otimes V \rightarrow V$ and $\rho^R : V \otimes \mathfrak{g} \rightarrow V$ (called the left and right $\mathfrak{g}$-actions)satisfying for $x, y \in \mathfrak{g}$, $v \in V$,
\begin{align}
\rho^L (x , \rho^L (y, v)) =~& \rho^L ([x,y]_\mathfrak{g}, v) + \rho^L (y, \rho^L (x, v)), \label{rep1}\\
\rho^L (x, \rho^R (v, y)) =~& \rho^R (\rho^L (x, v), y) + \rho^R (v, [x,y]_\mathfrak{g}), \label{rep2}\\
\rho^R (v, [x,y]_\mathfrak{g}) =~& \rho^R (\rho^R (v,x), y) + \rho^L (x, \rho^R (v, y)). \label{rep3}
\end{align} 
\end{defn}

We often denote a representation simply by $V$ when the bilinear maps $\rho^L$ and $\rho^R$ are clear from the context. It follows from the above definition that any Leibniz algebra $(\mathfrak{g}, [~,~]_\mathfrak{g})$ can be considered as a representation of itself with left and right $\mathfrak{g}$-actions are given by $\rho^L (x, y) = \rho^R (x, y) = [x, y]_\mathfrak{g}$, for $x, y \in \mathfrak{g}$. This is called the adjoint representation.

We will now recall the cohomology of a Leibniz algebra with coefficients in a representation. Let $(\mathfrak{g}, [~,~]_\mathfrak{g})$ be a Leibniz algebra and $(V, \rho^L, \rho^R)$ be a representation of it. For each $n \geq 0$, we define the $n$-th cochain group as $C^n_\mathrm{Leib} (\mathfrak{g}, V) := \mathrm{Hom}(\mathfrak{g}^{\otimes n}, V)$ and the differential $\delta_\mathrm{Leib} : C^n_\mathrm{Leib} (\mathfrak{g}, V) \rightarrow C^{n+1}_\mathrm{Leib} (\mathfrak{g}, V)$ by
\begin{align}
(\delta_{\mathrm{Leib}} f ) (x_1, \ldots, x_{n+1}) =~& \sum_{i=1}^n (-1)^{i+1}~ \rho^L \big( x_i, f(x_1, \ldots, \widehat{x_i}, \ldots, x_{n+1}) \big) + (-1)^{n+1} ~\rho^R \big( f (x_1, \ldots, x_n) , x_{n+1} \big) \\
&+ \sum_{1 \leq i < j \leq n+1} (-1)^i ~ f ( x_1, \ldots, \widehat{x_i}, \ldots, x_{j-1}, [x_i, x_j]_\mathfrak{g}, x_{j+1}, \ldots, x_{n+1} ), \nonumber
\end{align}
for $x_1, \ldots, x_{n+1} \in \mathfrak{g}$. Then $\{ C^\ast_\mathrm{Leib} (\mathfrak{g}, V), \delta_\mathrm{Leib} \}$ is a cochain complex. The corresponding cohomology groups are called the cohomology of the Leibniz algebra $\mathfrak{g}$ with coefficients in the representation $V$.

\medskip

In the following, we recall the Balavoine's bracket that characterizes Leibniz algebras as its Maurer-Cartan elements. We will use this bracket in Section \ref{sec-4} in the Maurer-Cartan characterization of weighted relative Rota-Baxter operators.

Let $\mathfrak{g}$ be a vector space (not necessarily a Leibniz algebra). Let $L = \oplus_{n \geq 0} \mathrm{Hom}(\mathfrak{g}^{\otimes n + 1}, \mathfrak{g})$ be the graded vector space of all multilinear maps on $\mathfrak{g}$ with a degree shift. It carries a graded Lie bracket (the {\bf Balavoine's bracket}) given by
\begin{align*}
\llbracket f, g \rrbracket_B = \sum_{i=1}^{m+1} (-1)^{(i-1)n} f \circ_i g   - (-1)^{mn} \sum_{i=1}^{n+1} (-1)^{(i-1)m} g \circ_i f, ~ \text{ where }
\end{align*}
\begin{align*}
(f \circ_i g)(x_1, \ldots, x_{m+n+1}) =  \sum_{\sigma \in Sh (i-1,n)} (-1)^\sigma f \big( x_{\sigma (1)}, \ldots, x_{\sigma (i-1)}, g(x_{\sigma (i)}, \ldots, x_{\sigma (i+n-1)}, x_{i+n}), \ldots, x_{m+n+1} \big),
\end{align*}
for $f \in \mathrm{Hom}(\mathfrak{g}^{\otimes m+1}, \mathfrak{g})$ and $g \in \mathrm{Hom}(\mathfrak{g}^{\otimes n+1}, \mathfrak{g})$. In other words, $(L, \llbracket ~, ~ \rrbracket_B )$ is a graded Lie algebra. A bilinear bracket $\mu_\mathfrak{g} = [~,~]_\mathfrak{g} \in \mathrm{Hom}(\mathfrak{g}^{\otimes 2}, \mathfrak{g})$ defines a Leibniz algebra structure on $\mathfrak{g}$ if and only if $\mu_\mathfrak{g}$ satisfies $\llbracket \mu_\mathfrak{g}, \mu_\mathfrak{g} \rrbracket_B =0$, i.e., $\mu_\mathfrak{g}$ is a Maurer-Cartan element in the graded Lie algebra $(L, \llbracket ~, ~ \rrbracket_B )$.

\section{Weighted relative Rota-Baxter operators}\label{sec-3}
In this section, we introduce $\lambda$-weighted (relative) Rota-Baxter operators on Leibniz algebras and study some basic properties.

\begin{defn}
Let $\mathfrak{g}$ be a Leibniz algebra. A linear map $T : \mathfrak{g} \rightarrow \mathfrak{g}$ is said to be a {\bf $\lambda$-weighted Rota-Baxter operator} on $\mathfrak{g}$ if the linear map $T$ satisfies
\begin{align*}
[Tx, Ty]_\mathfrak{g} =  T ([Tx,y]_\mathfrak{g} + [x, Ty]_\mathfrak{g} + \lambda [x,y]_\mathfrak{g}),~ \text{ for } x, y \in \mathfrak{g}.
\end{align*}
\end{defn}

\begin{exam}\label{exam-wrb}
(i) A Rota-Baxter operator on $\mathfrak{g}$ is simply a $0$-weighted Rota-Baxter operator.

(ii) The identity map $\mathrm{id} : \mathfrak{g} \rightarrow \mathfrak{g}$ is a $(-1)$-weighted Rota-Baxter operator.

(iii) If $f : \mathfrak{g} \rightarrow \mathfrak{g}$ is a Leibniz algebra morphism and $f^2 = f$ (idempotent condition), then $f$ is a $(-1)$-weighted Rota-Baxter operator. 

(iv) If $T$ is a $\lambda$-weighted Rota-Baxter operator, then for any $\nu \in {\bf k}$, the map $\nu T$ is a $(\nu \lambda)$-weighted Rota-Baxter operator.

(v) If $T$ is a $\lambda$-weighted Rota-Baxter operator, then $- \lambda \mathrm{id} - T$ is a $\lambda$-weighted Rota-Baxter operator.
\end{exam}

\medskip

Let $\mathfrak{g}$ and $\mathfrak{h}$ be two Leibniz algebras. We say that $\mathfrak{h}$ is a {\bf Leibniz $\mathfrak{g}$-representation} if there are bilinear maps $\rho^L : \mathfrak{g} \otimes \mathfrak{h} \rightarrow \mathfrak{h}$ and $\rho^R : \mathfrak{h} \otimes \mathfrak{g} \rightarrow \mathfrak{h}$ that make $(\mathfrak{h}, \rho^L, \rho^R)$ into a representation of the Leibniz algebra $\mathfrak{g}$ satisfying additionally for $u, v \in \mathfrak{h}$, $x \in \mathfrak{g}$,
\begin{align}
[u, \rho^R (v,x)]_\mathfrak{h} =~& \rho^R ([u,v]_\mathfrak{h}, x) + [v, \rho^R (u,x)]_\mathfrak{h}, \label{lrep-1}\\
[u, \rho^L (x,v)]_\mathfrak{h} =~& [\rho^R (u,x), v]_\mathfrak{h} + \rho^L (x, [u, v]_\mathfrak{h}),\label{lrep-2}\\
\rho^L (x, [u, v]_\mathfrak{h}) =~& [\rho^L (x,u), v]_\mathfrak{h} + [u, \rho^L (x,v)]_\mathfrak{h}. \label{lrep-3}
\end{align}
Note that, for any Leibniz algebra $\mathfrak{g}$, the adjoint representation is a Leibniz $\mathfrak{g}$-representation.

\begin{defn}\label{defn-wrrb-l}
Let $\mathfrak{g}$ and $\mathfrak{h}$ be two Leibniz algebras and $\mathfrak{h}$ be a Leibniz $\mathfrak{g}$-representation. A linear map $T : \mathfrak{h} \rightarrow \mathfrak{g}$ is said to be a {\bf $\lambda$-weighted relative Rota-Baxter operator} (on $\mathfrak{h}$ over the Leibniz algebra $\mathfrak{g}$) if
\begin{align}\label{lw}
[Tu, Tv]_\mathfrak{g} = T ( \rho^L(Tu, v) + \rho^R (u, Tv) + \lambda [u,v]_\mathfrak{h}), ~ \text{ for } u, v \in \mathfrak{h}.
\end{align}
\end{defn}

It follows that a $\lambda$-weighted Rota-Baxter operator on a Leibniz algebra $\mathfrak{g}$ is a $\lambda$-weighted relative Rota-Baxter operator on $\mathfrak{g}$ over itself.

%The following result is easy to prove.

%\begin{prop}
%Let $\mathfrak{g}$ and $\mathfrak{h}$ be two Leibniz algebras and $\mathfrak{h}$ be a Leibniz $\mathfrak{g}$-representation. Suppose $R : \mathfrak{h} \rightarrow \mathfrak{g}$ is a $\lambda$-weighted relative Rota-Baxter operator. Then we have the followings.
%\begin{itemize}
%\item[(i)] For any $\nu \in {\bf k}$, the map $\nu R$ is a $(\nu \lambda)$-weighted relative Rota-Baxter operator.
%\item[(ii)] If $f : \mathfrak{h} \rightarrow \mathfrak{g}$ is a Leibniz algebra morphism, then $- \lambda f  - R$ is a $\lambda$-weighted relative Rota-Baxter operator.
%\end{itemize}
%\end{prop}

Besides Example \ref{exam-wrb}, we have the following examples.

\begin{exam}
(i) Let $\mathfrak{g}, \mathfrak{h}$ be two Leibniz algebras and $\mathfrak{h}$ be a Leibniz $\mathfrak{g}$-representation. A linear map $D : \mathfrak{g} \rightarrow \mathfrak{h}$ is a said to be a $\lambda$-weighted crossed homomorphism (on $\mathfrak{g}$ with values in $\mathfrak{h}$) if
\begin{align*}
D ([x,y]_\mathfrak{g}) = \rho^L (x, Dy) + \rho^R (Dx,y) + \lambda [Dx, Dy]_\mathfrak{h}, ~ \text{ for } x, y \in \mathfrak{g}.
\end{align*} 
If $D$ is an invertible $\lambda$-weighted crossed homomorphism then $D^{-1} : \mathfrak{h} \rightarrow \mathfrak{g}$ is a $\lambda$-weighted relative Rota-Baxter operator.

(ii) Let $\mathfrak{g}$ be a Leibniz algebra. Any ideal $\mathfrak{h}$ of the Leibniz algebra $\mathfrak{g}$ can be regarded as a Leibniz $\mathfrak{g}$-representation. Then the inclusion map $i : \mathfrak{h} \rightarrow \mathfrak{g}$ is a $(-1)$-weighted relative Rota-Baxter operator.
\end{exam}

\begin{prop}
(Weighted semidirect product) Let $\mathfrak{g}$ and $\mathfrak{h}$ be two Leibniz algebras and $\mathfrak{h}$ be a Leibniz $\mathfrak{g}$-representation. Then for any $\lambda \in {\bf k}$, the direct sum $\mathfrak{g} \oplus \mathfrak{h}$ carries a new Leibniz algebra structure with bracket
\begin{align*}
[(x,u), (y, v)]_{\ltimes_\lambda} = \big( [x, y]_\mathfrak{g}, ~ \rho^L (x, v) + \rho^R (u, y) + \lambda [u, v]_\mathfrak{h}  \big),
\end{align*}
for $(x,u), (y, v) \in \mathfrak{g} \oplus \mathfrak{h}$. This is called the $\lambda$-weighted semidirect product, often denoted by $\mathfrak{g} \ltimes_\lambda \mathfrak{h}$.
\end{prop}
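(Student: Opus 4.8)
The plan is to verify the Leibniz identity (\ref{leibniz-identity}) for the bracket $[~,~]_{\ltimes_\lambda}$ by direct computation. Since this bracket is bilinear and the Leibniz identity is trilinear, by multilinearity it suffices to expand the three terms $[(x,u),[(y,v),(z,w)]_{\ltimes_\lambda}]_{\ltimes_\lambda}$, $[[(x,u),(y,v)]_{\ltimes_\lambda},(z,w)]_{\ltimes_\lambda}$ and $[(y,v),[(x,u),(z,w)]_{\ltimes_\lambda}]_{\ltimes_\lambda}$ for general elements, and then compare componentwise, sorting the resulting terms according to how many of the three inputs contribute their $\mathfrak{g}$-part and how many contribute their $\mathfrak{h}$-part.

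First I would dispatch the $\mathfrak{g}$-component. The first slot of $[~,~]_{\ltimes_\lambda}$ is literally $[~,~]_\mathfrak{g}$ and does not see the $\mathfrak{h}$-data, so the $\mathfrak{g}$-component of the identity for $[~,~]_{\ltimes_\lambda}$ is exactly the Leibniz identity (\ref{leibniz-identity}) for $\mathfrak{g}$, which holds by hypothesis.

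The substance lies in the $\mathfrak{h}$-component. After expansion, every term there falls into one of three types, graded by the number of $\mathfrak{h}$-inputs among $u,v,w$. The terms with a single $\mathfrak{h}$-input carry no factor of $\lambda$ and, according to whether that input sits in the first, second or third slot, collect precisely into the three representation axioms (\ref{rep3}), (\ref{rep2}), (\ref{rep1}). The terms with exactly two $\mathfrak{h}$-inputs carry a single factor of $\lambda$ and collect, according to the position (first, second, third) of the lone $\mathfrak{g}$-input, into the three compatibility axioms (\ref{lrep-3}), (\ref{lrep-2}), (\ref{lrep-1}) that upgrade $(\mathfrak{h},\rho^L,\rho^R)$ to a Leibniz $\mathfrak{g}$-representation. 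Finally, the terms with all three inputs in $\mathfrak{h}$ carry a factor $\lambda^2$ and reduce, after cancelling $\lambda^2$, to the Leibniz identity (\ref{leibniz-identity}) for $(\mathfrak{h},[~,~]_\mathfrak{h})$. Thus each of the seven defining identities of a Leibniz $\mathfrak{g}$-representation is used exactly once, and their conjunction is equivalent to the Leibniz identity for $[~,~]_{\ltimes_\lambda}$.

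The only genuine care point, and the place where a mechanical check can go wrong, is the bookkeeping: because Leibniz algebras are not skew-symmetric, the three arguments play genuinely different roles, and one must track which slot carries the $\mathfrak{h}$-part in order to pair each group of terms with the correct one of the seven axioms. A clean way to organize this, and to make the appearance of $\lambda$ transparent, is to observe that $(\mathfrak{h}, \lambda[~,~]_\mathfrak{h})$ is again a Leibniz algebra (rescaling a Leibniz bracket by a scalar preserves (\ref{leibniz-identity}), both sides being homogeneous of degree two in the bracket) and is still a Leibniz $\mathfrak{g}$-representation via the same $\rho^L, \rho^R$, since (\ref{rep1})--(\ref{rep3}) do not involve $[~,~]_\mathfrak{h}$ while (\ref{lrep-1})--(\ref{lrep-3}) are linear in it. With this reduction the weighted bracket $[~,~]_{\ltimes_\lambda}$ is nothing but the unweighted semidirect product bracket associated to the rescaled representation, so once the $\lambda = 1$ case is verified the general $\lambda$ follows formally. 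I expect no obstacle beyond this organizational matching; the verification is direct and the identity holds.
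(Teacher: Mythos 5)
Your proof is correct; the paper in fact states this proposition without proof, and your direct verification is exactly the expected argument. The componentwise bookkeeping checks out — the single-$\mathfrak{h}$-input terms match (\ref{rep3}), (\ref{rep2}), (\ref{rep1}), the $\lambda$-linear terms match (\ref{lrep-3}), (\ref{lrep-2}), (\ref{lrep-1}), and the $\lambda^2$ terms give the Leibniz identity for $\mathfrak{h}$ — and your observation that rescaling the bracket on $\mathfrak{h}$ by $\lambda$ reduces everything to the unweighted case is a clean way to package it.
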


We have the following characterization of $\lambda$-weighted relative Rota-Baxter operators.

\begin{prop}
A linear map $T : \mathfrak{h} \rightarrow \mathfrak{g}$ is a $\lambda$-weighted relative Rota-Baxter operator if and only if the graph
%\begin{align*}
$Gr (T) = \{ (Tu, u) |~ u \in \mathfrak{h}  \}$ is a subalgebra of the $\lambda$-weighted semidirect product $\mathfrak{g} \ltimes_\lambda \mathfrak{h}$.
%\end{align*}
\end{prop}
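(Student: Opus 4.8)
The plan is to reduce the statement to a direct comparison between the closure condition for $Gr(T)$ under the semidirect-product bracket and the defining identity (\ref{lw}). First I would note that, since $T$ is linear, the graph $Gr(T) = \{ (Tu, u) \mid u \in \mathfrak{h} \}$ is automatically a linear subspace of $\mathfrak{g} \oplus \mathfrak{h}$; indeed it is the image of the injective linear map $u \mapsto (Tu, u)$, hence isomorphic to $\mathfrak{h}$. Consequently the assertion that $Gr(T)$ is a subalgebra of $\mathfrak{g} \ltimes_\lambda \mathfrak{h}$ amounts precisely to the requirement that $Gr(T)$ be closed under the bracket $[~,~]_{\ltimes_\lambda}$.

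Next I would compute the bracket of two arbitrary elements $(Tu, u), (Tv, v) \in Gr(T)$ using the explicit formula for the $\lambda$-weighted semidirect product. Substituting $x = Tu$ and $y = Tv$ yields
\[
[(Tu, u), (Tv, v)]_{\ltimes_\lambda} = \big( [Tu, Tv]_\mathfrak{g}, ~ \rho^L(Tu, v) + \rho^R(u, Tv) + \lambda [u, v]_\mathfrak{h} \big).
\]
The key observation is the graph-membership criterion: an element $(a, b) \in \mathfrak{g} \oplus \mathfrak{h}$ lies in $Gr(T)$ if and only if $a = Tb$. Applying this criterion to the bracket just computed, closure of $Gr(T)$ is equivalent to the first component being $T$ applied to the second, that is, to
\[
[Tu, Tv]_\mathfrak{g} = T \big( \rho^L(Tu, v) + \rho^R(u, Tv) + \lambda [u, v]_\mathfrak{h} \big),
\]
which is exactly the identity (\ref{lw}).

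Finally I would observe that, as $u$ and $v$ range over all of $\mathfrak{h}$, demanding this closure for every pair of graph elements coincides with demanding that (\ref{lw}) hold for all $u, v \in \mathfrak{h}$; this delivers both implications simultaneously. The forward direction uses (\ref{lw}) to return the bracket into $Gr(T)$, while the converse simply reads (\ref{lw}) off from the closure hypothesis. I do not expect any genuine obstacle here beyond bookkeeping: the entire content lies in matching the two components of the semidirect-product bracket against the graph-membership condition, and the fact that $Gr(T)$ is a subspace is immediate from the linearity of $T$.
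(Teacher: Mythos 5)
Your proposal is correct and follows exactly the paper's argument: compute $[(Tu,u),(Tv,v)]_{\ltimes_\lambda}$ and observe that membership in $Gr(T)$ is precisely the identity (\ref{lw}). The only difference is that you spell out the subspace and graph-membership points, which the paper leaves implicit.
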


\begin{proof}
For any $u, v \in \mathfrak{h}$, we have
\begin{align*}
[(Tu,u), (Tv, v)]_{\ltimes_\lambda} = ([Tu, Tv]_\mathfrak{g},~ \rho^L(Tu,u) + \rho^R (u, Tv) + \lambda [u,v]_\mathfrak{h}).
\end{align*}
This is in $Gr (T)$ if and only if (\ref{lw}) holds. Hence the result follows.
\end{proof}

\begin{remark}\label{rmk-induced-leib}
Let $T : \mathfrak{h} \rightarrow \mathfrak{g}$ be a $\lambda$-weighted relative Rota-Baxter operator. Since $Gr (T)$ is isomorphism to $\mathfrak{h}$ as a vector space, we get that $\mathfrak{h}$ inherits a new Leibniz algebra structure with the bracket
\begin{align*}
[u,v]_T : = \rho^L (Tu,v) + \rho^R (u, Tv) + \lambda [u,v]_\mathfrak{h}, ~ \text{ for } u, v \in \mathfrak{h}.
\end{align*}
In other words, $(\mathfrak{h}, [~,~]_T)$ is a Leibniz algebra, denoted by $\mathfrak{h}_T$ (called the induced Leibniz algebra). Moreover, $T: \mathfrak{h}_T \rightarrow \mathfrak{g}$ is a morphism of Leibniz algebras.
\end{remark}

Let $T, T' : \mathfrak{h} \rightarrow \mathfrak{g}$ be two $\lambda$-weighted relative Rota-Baxter operators. A {\bf morphism} from $T$ to $T'$ consists of a pair $(\phi,\psi)$ of Leibniz algebra morphisms $\phi : \mathfrak{g} \rightarrow \mathfrak{g}$ and $\psi : \mathfrak{h} \rightarrow \mathfrak{h}$ satisfying
\begin{align*}
\phi \circ T = T' \circ \psi, \quad \psi (\rho^L (x, u) ) = \rho^L (\phi (x), \psi (u)) ~~~~ \text{ and } ~~~~ \psi (\rho^R (u, x)) = \rho^R (\psi (u), \phi (x)), ~ \text{ for } x \in \mathfrak{g}, u \in \mathfrak{h}.
\end{align*}

\begin{prop}
Let $(\phi, \psi)$ be a morphism of $\lambda$-weighted relative Rota-Baxter operators from $T$ to $T'$. Then $\psi : \mathfrak{h} \rightarrow \mathfrak{h}$ is a morphism of induced Leibniz algebras from $(\mathfrak{h}, [~,~]_T)$ to $(\mathfrak{h}, [~,~]_{T'})$.
\end{prop}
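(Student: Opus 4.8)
The plan is to verify directly that $\psi$ intertwines the two induced brackets, i.e.\ that $\psi([u,v]_T) = [\psi(u), \psi(v)]_{T'}$ for all $u, v \in \mathfrak{h}$, where both brackets are given by the formula of Remark \ref{rmk-induced-leib}. Since $\psi$ is linear, I would apply it termwise to the definition
$$[u,v]_T = \rho^L(Tu, v) + \rho^R(u, Tv) + \lambda [u,v]_\mathfrak{h},$$
obtaining the three summands $\psi(\rho^L(Tu,v))$, $\psi(\rho^R(u, Tv))$, and $\lambda \psi([u,v]_\mathfrak{h})$, and then rewrite each one using the defining properties of the morphism $(\phi, \psi)$.

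For the first term I would use the equivariance condition $\psi(\rho^L(x,u)) = \rho^L(\phi(x), \psi(u))$ with $x = Tu$, together with the intertwining relation $\phi \circ T = T' \circ \psi$, to get $\psi(\rho^L(Tu, v)) = \rho^L(\phi(Tu), \psi(v)) = \rho^L(T'\psi(u), \psi(v))$. The second term is handled symmetrically via $\psi(\rho^R(u,x)) = \rho^R(\psi(u), \phi(x))$ and the same intertwining relation, yielding $\rho^R(\psi(u), T'\psi(v))$. For the third term I would invoke that $\psi$ is a morphism of Leibniz algebras on $\mathfrak{h}$, so that $\psi([u,v]_\mathfrak{h}) = [\psi(u), \psi(v)]_\mathfrak{h}$. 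Summing the three rewritten pieces reproduces exactly $\rho^L(T'\psi(u), \psi(v)) + \rho^R(\psi(u), T'\psi(v)) + \lambda[\psi(u), \psi(v)]_\mathfrak{h}$, which is $[\psi(u), \psi(v)]_{T'}$ by definition.

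There is no genuine obstacle here; the argument is a bookkeeping exercise in which each of the three constituents of the induced bracket is matched against one defining property of the morphism. The only point worth flagging is that the computation relies on the intertwining relation $\phi \circ T = T' \circ \psi$ and on the two equivariance identities for $\psi$, but never on $\phi$ being a Leibniz algebra morphism on $\mathfrak{g}$; that hypothesis, though part of the definition of a morphism of operators, is not needed for this particular statement.
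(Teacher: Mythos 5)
Your argument is correct and coincides with the paper's own proof: both apply $\psi$ termwise to the induced bracket, use the two equivariance identities together with $\phi \circ T = T' \circ \psi$, and use that $\psi$ preserves $[~,~]_\mathfrak{h}$ for the weighted term. Your side remark that the hypothesis that $\phi$ is a Leibniz algebra morphism is not used here is accurate but does not change the route.
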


\begin{proof}
For any $u, v \in \mathfrak{h}$, we have
\begin{align*}
\psi ([u,v]_T) =~& \psi \big( \rho^L (Tu,v) + \rho^R (u, Tv) + \lambda [u,v]_\mathfrak{h}  \big) \\
=~& \rho^L (\phi \circ T (u), \psi (v)) + \rho^R (\psi (u), \phi \circ T (v)) + \lambda [\psi (u), \psi (v)]_\mathfrak{h} \\
=~& \rho^L (T' (\psi (u)), \psi (v))  + \rho^R (\psi (u), T' (\psi (v))) + \lambda [\psi (u), \psi (v)]_\mathfrak{h} = [ \psi (u), \psi (v) ]_{T'}.
\end{align*}
This shows that $\psi : (\mathfrak{h}, [~,~]_T) \rightarrow (\mathfrak{h}, [~,~]_{T'})$ is a morphism of Leibniz algebras.
\end{proof}

\section{Maurer-Cartan characterization and cohomology of weighted relative Rota-Baxter operators}\label{sec-4}
Let $\mathfrak{g}$ and $\mathfrak{h}$ be two Leibniz algebras and $\mathfrak{h}$ be a Leibniz $\mathfrak{g}$-representation. With this data, we first construct a differential graded Lie algebra whose Maurer-Cartan elements are precisely $\lambda$-weighted relative Rota-Baxter operators. This characterization allows us to define a cohomology associated to a $\lambda$-weighted relative Rota-Baxter operator. Later, we interpret this as the cohomology of the corresponding induced Leibniz algebra with coefficients in a suitable representation.

Let $\mu_\mathfrak{g} \in \mathrm{Hom} (\mathfrak{g}^{\otimes 2}, \mathfrak{g})$ and $\mu_\mathfrak{h} \in \mathrm{Hom} (\mathfrak{h}^{\otimes 2}, \mathfrak{h})$ denote the Leibniz operations on $\mathfrak{g}$ and $\mathfrak{h}$, respectively. Moreover, let $\rho^L : \mathfrak{g} \otimes \mathfrak{h} \rightarrow \mathfrak{h}$ and $\rho^R : \mathfrak{h} \otimes \mathfrak{g} \rightarrow \mathfrak{h}$ denote the left and right $\mathfrak{g}$-actions on $\mathfrak{h}$, respectively.  Take $V = \mathfrak{g} \oplus \mathfrak{h}$ and consider the graded Lie algebra $(L = \oplus_{n \geq 0} \mathrm{Hom}(V^{\otimes n+1}, V), [~,~]_B)$ on the shifted space of multilinear maps on $V$ with the Balavoine's bracket. Then it can be checked that the element $\theta := \mu_\mathfrak{g} + \rho^L + \rho^R \in \mathrm{Hom}(V^{\otimes 2}, V)$ defined by
\begin{align*}
\theta ((x,u), (y,v))= ([x,y]_\mathfrak{g},~ \rho^L (x,v) + \rho^R (u,y)),~ \text{ for } (x,y), (y,v) \in V
\end{align*}
satisfies $[\theta, \theta]_B = 0$. Hence $\theta$ induces a degree $1$ differential $d_\theta = [\theta, -]_B$ on the graded vector space $L$. Moreover, the graded subspace $\mathfrak{a} = \oplus_{n \geq 0} \mathrm{Hom}(\mathfrak{h}^{\otimes n+1}, \mathfrak{g}) \subset L$ is an abelian Lie subalgebra. Hence, by the derived bracket construction \cite{voro}, the shifted space $\mathfrak{a}[-1] = \oplus_{n \geq 1} \mathrm{Hom}(\mathfrak{h}^{\otimes n}, \mathfrak{g})$ carries a graded Lie bracket given by
\begin{align*}
\llbracket P, Q \rrbracket := (-1)^m [d_\theta (P), Q]_B = (-1)^m [[\theta, P]_B, Q]_B,
\end{align*}
for $P \in \mathrm{Hom}(\mathfrak{h}^{\otimes m}, \mathfrak{g}), Q \in \mathrm{Hom}(\mathfrak{h}^{\otimes n}, \mathfrak{g}).$ The explicit description of the bracket is as follows (see \cite{sheng-pl})
\begin{align}\label{b-formula}
&\llbracket P, Q \rrbracket (u_1, \ldots, u_{m+n}) \\
&= \sum_{i=1}^m \sum_{\sigma \in Sh (i-1,n)} (-1)^{(i-1)n } (-1)^\sigma P \big(  u_{\sigma (1)}, \ldots, u_{\sigma (i-1)}, \rho^L ( Q (u_{\sigma (i)}, \ldots, u_{\sigma (i+n-1)}), u_{i+n}), \ldots, u_{m+n} \big) \nonumber \\
&+ \sum_{i=1}^m \sum_{ {\sigma \in Sh (i-1,1, n-1) } } (-1)^{(i-1)n} (-1)^\sigma P \big(  u_{\sigma (1)}, \ldots, \rho^R ( u_{\sigma (i)}, Q (  u_{\sigma (i+1)}, \ldots, u_{\sigma (i+n-1)}, u_{i+n} )), \ldots, u_{m+n}    \big)  \nonumber \\
&+ \sum_{\sigma \in Sh (m,n-1)} (-1)^{mn} (-1)^\sigma [ P ( u_{\sigma (1)}, \ldots, u_{\sigma (m)} ), Q (u_{\sigma (m+1)}, \ldots, u_{\sigma (m+n-1)}, u_{m+n})]_\mathfrak{g} \nonumber \\
&- (-1)^{mn} \big\{ \sum_{i=1}^n \sum_{\sigma \in Sh (i-1,m)} (-1)^{(i-1)m } (-1)^\sigma Q \big(  u_{\sigma (1)}, \ldots, \rho^L ( P (u_{\sigma (i)}, \ldots, u_{\sigma (i+m-1)}), u_{i+m}), \ldots, u_{m+n} \big) \nonumber  \\
&+ \sum_{i=1}^n \sum_{ \sigma \in Sh (i-1,1,m-1) } (-1)^{(i-1)m} (-1)^\sigma Q \big(  u_{\sigma (1)}, \ldots, \rho^R ( u_{\sigma (i)}, P (  u_{\sigma (i+1)}, \ldots, u_{\sigma (i+m-1)}, u_{i+m} )), \ldots, u_{m+n}    \big)  \nonumber \\
&+ \sum_{\sigma \in Sh (n,m-1)} (-1)^{mn} (-1)^\sigma [ Q ( u_{\sigma (1)}, \ldots, u_{\sigma (n)} ), P (u_{\sigma (n+1)}, \ldots, u_{\sigma (m+n-1)}, u_{m+n})]_\mathfrak{g}  \big\}.  \nonumber 
\end{align}

On the other hand, for any $\lambda \in {\bf k}$, the element $\theta' =  -\lambda \mu_\mathfrak{h} \in \mathrm{Hom}(V^{\otimes 2}, V)$ defined by
\begin{align*}
\theta' ((x,u), (y,v)) = (0,  -\lambda [u, v]_\mathfrak{h}),~ \text{for } (x, u), (y,v) \in V
\end{align*}
satisfies $[\theta', \theta']_B = 0$. Hence the element $\theta'$ also induces a differential $d_{\theta'} =[\theta', -]_B$ on the graded vector space $L$. It is then easy to observe that the graded subspace $\mathfrak{a}[-1] = \oplus_{n \geq 1} (\mathfrak{h}^{\otimes n}, \mathfrak{g})$ is closed under the differential $d_{\theta'}$. Let us denote the restriction of $d_{\theta'}$ to this subspace by $d$. This is explicitly given by
\begin{align}\label{d-formula}
(dP)(u_1, \ldots, u_{n+1}) = (-1)^{n} \sum_{1 \leq i < j \leq n+1} (-1)^i P (u_1, \ldots, \widehat{u_i}, \ldots, u_{j-1}, \lambda [u_i, u_j]_\mathfrak{h}, u_{j+1}, \ldots, u_{n+1}).
\end{align}
%The differential $d$ can be extended to $\oplus_{n \geq 0} \mathrm{Hom}(\mathfrak{h}^{\otimes n}, \mathfrak{g})$ by the following
%\begin{align*}
%(dx)(u) = ...................................
%\end{align*}

\medskip

Finally, we observe that the elements $\theta$ and $\theta'$ additionally satisfies the compatibility $[\theta, \theta']_B =0$. This implies that
\begin{align*}
d \llbracket P, Q \rrbracket 
=& (-1)^m [\theta', [[\theta, P]_B, Q]_B ]_B \\
=& (-1)^m [[\theta', [\theta, P]_B ]_B, Q ]_B + (-1)^m (-1)^m [[\theta, P]_B, [\theta', Q]_B ]_B \\
=& (-1)^{m+1} [[[ \theta, [\theta',P]_B ]_B, Q ]_B + [[\theta, P]_B, dQ]_B \\
=& (-1)^{m+1} [[\theta,dP]_B, Q]_B +  [[\theta, P]_B, dQ]_B = \llbracket dP, Q \rrbracket + (-1)^m \llbracket P, dQ \rrbracket.
\end{align*}
Thus, we get the following.

\begin{thm}\label{mc-thm}
Let $\mathfrak{g}, \mathfrak{h}$ be two Leibniz algebras and $\mathfrak{h}$ be a Leibniz $\mathfrak{g}$-representation. Then the triple $(\oplus_{n \geq 1} \mathrm{Hom}(\mathfrak{h}^{\otimes n}, \mathfrak{g}), \llbracket ~, ~ \rrbracket, d)$ is a differential graded Lie algebra. Moreover, a linear map $T : \mathfrak{h} \rightarrow \mathfrak{g}$ is a $\lambda$-weighted relative Rota-Baxter operator if and only if $T$ is a Maurer-Cartan element in the differential graded Lie algebra $(\oplus_{n \geq 1} \mathrm{Hom}(\mathfrak{h}^{\otimes n}, \mathfrak{g}), \llbracket ~, ~ \rrbracket, d)$.
\end{thm}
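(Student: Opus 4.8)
The plan is to treat the two assertions separately, exploiting the fact that nearly all the structural ingredients have already been assembled above; each of the three differential graded Lie algebra axioms reduces to a property of the Balavoine bracket $[~,~]_B$ on $L$ together with the Maurer--Cartan elements $\theta, \theta'$. First, that $\llbracket ~, ~ \rrbracket$ is a graded Lie bracket is precisely the content of the derived bracket construction of Voronov: since $\mathfrak{a} = \oplus_{n \geq 0} \mathrm{Hom}(\mathfrak{h}^{\otimes n+1}, \mathfrak{g})$ is an abelian subalgebra of $(L, [~,~]_B)$ and $\theta$ satisfies $[\theta, \theta]_B = 0$ (so that $d_\theta = [\theta, -]_B$ is a square-zero degree $1$ derivation of $[~,~]_B$), the derived bracket $\llbracket P, Q \rrbracket = (-1)^m [d_\theta(P), Q]_B$ is automatically graded antisymmetric and satisfies the graded Jacobi identity; that it takes values in $\mathfrak{a}[-1]$ is built into the form of $\theta$. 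Hence no further computation is needed here, and the explicit expression (\ref{b-formula}) is required only for the Maurer--Cartan equation below.

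Second, $d^2 = 0$ follows from $[\theta', \theta']_B = 0$: by the graded Jacobi identity one has $d_{\theta'}^2 = [\theta', [\theta', -]_B]_B = \tfrac{1}{2}[[\theta', \theta']_B, -]_B = 0$, and since $d$ is the restriction of $d_{\theta'}$ to the subspace $\mathfrak{a}[-1]$, which is closed under $d_{\theta'}$, we conclude $d^2 = 0$ there as well. Third, the compatibility condition that $d$ be a graded derivation of $\llbracket ~, ~ \rrbracket$, namely $d\llbracket P, Q\rrbracket = \llbracket dP, Q\rrbracket + (-1)^m \llbracket P, dQ\rrbracket$, is exactly the four-line computation displayed immediately above the theorem statement, whose only inputs are the graded Jacobi identity for $[~,~]_B$ and the compatibility $[\theta, \theta']_B = 0$. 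Together these three facts establish that the triple is a differential graded Lie algebra.

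For the second assertion, I would write out the Maurer--Cartan equation $dT + \tfrac{1}{2}\llbracket T, T\rrbracket = 0$ for a degree $1$ element $T \in \mathrm{Hom}(\mathfrak{h}, \mathfrak{g})$ and evaluate both summands on a pair $(u, v) \in \mathfrak{h}^{\otimes 2}$. Specializing (\ref{d-formula}) to $n = 1$ gives $(dT)(u,v) = \lambda\, T([u,v]_\mathfrak{h})$, while specializing (\ref{b-formula}) to $m = n = 1$ gives $\tfrac{1}{2}\llbracket T, T\rrbracket(u,v) = T(\rho^L(Tu, v)) + T(\rho^R(u, Tv)) - [Tu, Tv]_\mathfrak{g}$. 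Adding these two expressions and setting the sum to zero yields $[Tu, Tv]_\mathfrak{g} = T(\rho^L(Tu, v) + \rho^R(u, Tv) + \lambda [u,v]_\mathfrak{h})$, which is exactly identity (\ref{lw}); hence $T$ is a Maurer--Cartan element if and only if it is a $\lambda$-weighted relative Rota--Baxter operator.

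The routine but error-prone part, and the step I expect to demand the most care, is the explicit evaluation of (\ref{b-formula}) at $m = n = 1$: one must track the prefactors $(-1)^{(i-1)n}$ and $(-1)^{mn}$, the shuffle signs $(-1)^\sigma$ (all trivial here, since the relevant shuffle sets $Sh(0,1)$, $Sh(0,1,0)$, $Sh(1,0)$ are singletons), and the outer sign $-(-1)^{mn}$ on the braced block, and then confirm that the two halves of (\ref{b-formula}) coincide so that the factor of $\tfrac{1}{2}$ is absorbed cleanly. Once the degree convention is fixed (under which $T \in \mathrm{Hom}(\mathfrak{h}, \mathfrak{g})$ sits in degree $1$, consistent with the sign $(-1)^m$ in the derived bracket), the remaining verification is purely mechanical.
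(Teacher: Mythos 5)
Your proposal is correct and follows essentially the same route as the paper: the differential graded Lie algebra structure is obtained exactly from the preceding discussion (Voronov's derived bracket on the abelian subalgebra $\mathfrak{a}$, the identity $[\theta',\theta']_B=0$, and the displayed computation using $[\theta,\theta']_B=0$ for the derivation property), and the Maurer--Cartan characterization is verified by evaluating $dT+\tfrac{1}{2}\llbracket T,T\rrbracket$ on a pair $(u,v)$, just as in the paper's proof. Your explicit values of $(dT)(u,v)$ and $\llbracket T,T\rrbracket(u,v)$ agree with the paper's (up to a harmless overall factor of $2$ that the paper carries along), so nothing is missing.
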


\begin{proof}
The first part follows from previous discussions. For the second part, we observe that for any linear map $T : \mathfrak{h} \rightarrow \mathfrak{g}$, we have
\begin{align*}
\llbracket T, T \rrbracket (u,v) =~& 2 \big(  T (\rho^L (Tu,v)) + T (\rho^R (u, Tv) - [Tu,Tv]_\mathfrak{g} \big) \qquad (\text{from } (\ref{b-formula})) \\
(dT)(u,v) =~&  \lambda T ([u,v]_\mathfrak{h}) \qquad (\text{from }\ref{d-formula})).
\end{align*}
Therefore,
\begin{align*}
(dT + \frac{1}{2}\llbracket T, T \rrbracket)(u,v) = 2 \big( T (\rho^L (Tu,v) +  \rho^R (u, Tv) + \lambda [u, v]_\mathfrak{h}) - [Tu,Tv]_\mathfrak{g}  \big). 
\end{align*}
This shows that $T$ is a Maurer-Cartan element (i.e. $dT + \frac{1}{2}\llbracket T, T \rrbracket = 0$) if and only if $T$ is a $\lambda$-weighted relative Rota-Baxter operator.
\end{proof}

\medskip

Let $T : \mathfrak{h} \rightarrow \mathfrak{g}$ be a $\lambda$-weighted relative Rota-Baxter operator. That is, $T$ can be equivalently seen as a Maurer-Cartan element in the differential graded Lie algebra constructed in Theorem \ref{mc-thm}. Therefore, $T$ induces a differential
\begin{align*}
d_T = d + \llbracket T, - \rrbracket : \mathrm{Hom}(\mathfrak{h}^{\otimes n}, \mathfrak{g}) \rightarrow \mathrm{Hom}(\mathfrak{h}^{\otimes n+1}, \mathfrak{g}), ~ \text{ for } n \geq 1.
\end{align*}
Explicitly, we have
\begin{align}\label{dt-formula}
&(d_T f) (u_1, \ldots, u_{n+1}) \\
&= (-1)^n \sum_{i=1}^n (-1)^{i+1} \big(  [Tu_i, f (u_1, \ldots, \widehat{u_i}, \ldots, u_{n+1})]_\mathfrak{g} - T (\rho^R (u_i, f (u_1, \ldots, \widehat{u_i}, \ldots, u_{n+1}) )) \big) \nonumber \\
&- \big( [f (u_1, \ldots, u_n), T(u_{n+1})]_\mathfrak{g} - T (\rho^L ( f (u_1, \ldots, u_n), u_{n+1}  ))  \big) \nonumber \\
&+ \sum_{1 \leq i < j \leq n+1} (-1)^{i+n} f (u_1, \ldots, \widehat{u_i}, \ldots, u_{j-1}, \rho^L (Tu, v)+ \rho^R (u, Tv) + \lambda [u,v]_\mathfrak{h}, u_{j+1}, \ldots, u_{n+1}). \nonumber
\end{align}

%For each $n \geq 1$, define $C^n_T (\mathfrak{h}, \mathfrak{g}) = \mathrm{Hom}(\mathfrak{h}^{\otimes n}, \mathfrak{g})$. Then it follows that $\{ C^\ast_T (\mathfrak{h}, \mathfrak{g}), d_T \}$ is a cochain complex. Let $Z^n_T (\mathfrak{h}, \mathfrak{g})$ and $B^n_T (\mathfrak{h}, \mathfrak{g})$ be the space of $n$-cocycles and $n$-coboundaries, respectively. The $n$-th cohomology group of the $\lambda$-weighted relative Rota-Baxter operator $T$ is defined by
%\begin{align*}
%H^n_T (\mathfrak{h}, \mathfrak{g}) := \frac{ Z^n_T (\mathfrak{h}, \mathfrak{g}) }{ B^n_T (\mathfrak{h}, \mathfrak{g}) },~ \text{ for } n \geq 1.
%\end{align*}

\medskip

In the following, we show that the differential (\ref{dt-formula}) can be seen as the differential of the Leibniz algebra cohomology of $\mathfrak{h}_T$ (see Remark \ref{rmk-induced-leib}) with coefficients in a suitable representation. We first observe the following.

\begin{prop}
Let $T : \mathfrak{h} \rightarrow \mathfrak{g}$ be a $\lambda$-weighted relative Rota-Baxter operator. Then $(\mathfrak{g}, \varrho^L, \varrho^R)$ is a representation of the induced Leibniz algebra $\mathfrak{h}_T$ with left and right $\mathfrak{h}_T$-actions given by
\begin{align*}
\varrho^L : \mathfrak{h}_T \otimes \mathfrak{g} \rightarrow \mathfrak{g}, \quad \varrho^L (u,x) = [Tu,x]_\mathfrak{g} - T (\rho^R (u,x)),\\
\varrho^R : \mathfrak{g} \otimes \mathfrak{h}_T \rightarrow \mathfrak{g}, \quad \varrho^R (x,u) = [x, Tu]_\mathfrak{g} - T (\rho^L (x,u)).
\end{align*}
\end{prop}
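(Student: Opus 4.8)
The claim is that the triple $(\mathfrak{g}, \varrho^L, \varrho^R)$ satisfies the three defining conditions (\ref{rep1})--(\ref{rep3}) of a representation, now with the roles of the Leibniz algebra and its module played by $\mathfrak{h}_T$ and $\mathfrak{g}$, respectively, and with the bracket $[u,v]_T = \rho^L(Tu,v) + \rho^R(u,Tv) + \lambda [u,v]_\mathfrak{h}$. The plan is to verify these three identities directly. For each one I would substitute the definitions $\varrho^L(u,x) = [Tu,x]_\mathfrak{g} - T(\rho^R(u,x))$ and $\varrho^R(x,u) = [x,Tu]_\mathfrak{g} - T(\rho^L(x,u))$, together with the expression for $[~,~]_T$, and expand everything into a sum of elementary terms.

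After expansion, every term falls into one of three types, and each type is handled by a different input. First, the terms built purely from the bracket of $\mathfrak{g}$ (such as $[Tu,[Tv,x]_\mathfrak{g}]_\mathfrak{g}$) cancel against one another by the Leibniz identity (\ref{leibniz-identity}) in $\mathfrak{g}$. Second, the terms of the form $T(\rho^{L}(\cdots))$ or $T(\rho^{R}(\cdots))$, in which $T$ is applied to a single nested action, are reconciled using the representation axioms (\ref{rep1})--(\ref{rep3}) of $\mathfrak{h}$ as a $\mathfrak{g}$-module together with the three extra Leibniz-representation compatibilities (\ref{lrep-1})--(\ref{lrep-3}). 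Third, the mixed terms---those containing $[T(\cdots),T(\cdots)]_\mathfrak{g}$ or $T(\rho(\,\cdot\,,T(\cdots)))$---are precisely where the Rota-Baxter hypothesis enters: here I would invoke the defining relation (\ref{lw}), equivalently the fact (Remark \ref{rmk-induced-leib}) that $T \colon \mathfrak{h}_T \to \mathfrak{g}$ is a Leibniz morphism, so that $[Tu,Tv]_\mathfrak{g} = T([u,v]_T)$, to trade a bracket in $\mathfrak{g}$ for a single application of $T$ to the induced bracket. Matching the three families of terms then yields each of (\ref{rep1})--(\ref{rep3}) in turn.

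There is a cleaner, essentially computation-free alternative worth recording. Comparing the explicit operator $d_T$ of (\ref{dt-formula}) with the Leibniz cochain differential $\delta_{\mathrm{Leib}}$ of $\mathfrak{h}_T$ with coefficients in the candidate module $(\mathfrak{g}, \varrho^L, \varrho^R)$, one sees term by term that $\varrho^L(u_i, f(\cdots)) = [Tu_i, f(\cdots)]_\mathfrak{g} - T(\rho^R(u_i, f(\cdots)))$ and $\varrho^R(f(\cdots), u_{n+1}) = [f(\cdots), Tu_{n+1}]_\mathfrak{g} - T(\rho^L(f(\cdots), u_{n+1}))$ reproduce exactly the first two lines of (\ref{dt-formula}), while the bracket $[u_i,u_j]_T$ reproduces the last line; hence $d_T = (-1)^n \delta_{\mathrm{Leib}}$ on $\mathrm{Hom}(\mathfrak{h}^{\otimes n}, \mathfrak{g})$. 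Since $T$ is a Maurer-Cartan element (Theorem \ref{mc-thm}), the twisted differential satisfies $d_T \circ d_T = 0$, so $\delta_{\mathrm{Leib}} \circ \delta_{\mathrm{Leib}} = 0$; and the vanishing of $\delta_{\mathrm{Leib}}^2$ on cochains of low degree is equivalent to the representation axioms (\ref{rep1})--(\ref{rep3}), which proves the proposition.

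The main obstacle in the direct route is purely organizational: keeping track of the many terms and confirming that they cancel, the crucial point being that the mixed $T$-terms only close up after (\ref{lw}) is used to convert $[Tu,Tv]_\mathfrak{g}$ into $T([u,v]_T)$. In the conceptual route the delicate points are the sign bookkeeping in the identification $d_T = (-1)^n\delta_{\mathrm{Leib}}$ and the verification that $d_T^2 = 0$ at the lowest available degree (on $\mathrm{Hom}(\mathfrak{h},\mathfrak{g})$) already forces all three axioms, which one establishes by probing with arbitrary $f \in \mathrm{Hom}(\mathfrak{h}, \mathfrak{g})$.
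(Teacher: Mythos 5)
Your first (direct) route is exactly the paper's proof: the author expands each of the three axioms (\ref{rep1})--(\ref{rep3}) for $(\mathfrak{g},\varrho^L,\varrho^R)$, cancels the pure $[\,,\,]_\mathfrak{g}$-terms by the Leibniz identity, rewrites every occurrence of $[Tu,Tv]_\mathfrak{g}$ via (\ref{lw}), and closes the remaining terms with (\ref{rep1})--(\ref{rep3}) and (\ref{lrep-1})--(\ref{lrep-3}), precisely as you describe.

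Your second, ``computation-free'' route is not a complete proof as stated. Granting $d_T^2=0$ and the identification $d_T=(-1)^n\delta_{\mathrm{Leib}}$ (which the paper records only as a remark \emph{after} the proposition), what you actually obtain from $\delta_{\mathrm{Leib}}^2 f=0$ on $f\in\mathrm{Hom}(\mathfrak{h},\mathfrak{g})$ is the single combined identity
\begin{align*}
A_1(u,v,f(w))+A_2(u,f(v),w)+A_3(f(u),v,w)=0,
\end{align*}
where $A_1,A_2,A_3$ denote the defects of the three representation axioms (the $f(\mu(\cdot,\cdot))$-terms cancel using the Leibniz identity for $[\,,\,]_T$). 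Separating this into $A_1=A_2=A_3=0$ by ``probing with arbitrary $f$'' requires choosing $f$ vanishing on two of the arguments while hitting a prescribed $x\in\mathfrak{g}$ on the third, which is only possible when the relevant arguments can be separated linearly; for $\dim\mathfrak{h}\le 2$ (e.g.\ $u=v=w$ spanning a one-dimensional $\mathfrak{h}$) this probing fails and only the sum of the three defects is forced to vanish. So the conceptual route either needs a dimension hypothesis and a careful polarization argument, or should be downgraded to a consistency check; the direct computation remains the actual proof.
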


\begin{proof}
For any $u, v \in \mathfrak{h}$ and $x \in \mathfrak{g}$,
\begin{align*}
&\varrho^L (u, \varrho^L (v,x)) - \varrho^L ([u,v]_T,x) - \varrho^L (v, \varrho^L (u,x)) \\
&= \varrho^L \big( u, [Tv, x]_\mathfrak{g} - T\rho^R (v,x) \big) - [T[u,v]_T, x]_\mathfrak{g} + T \rho^R ([u,v]_T,x) - \varrho^L (v, [Tu,x]_\mathfrak{g} - T\rho^R (u,x)) \\
&= \cancel{[Tu, [Tv, x]_\mathfrak{g} ]_\mathfrak{g}} - [Tu, T \rho^R (v,x)]_\mathfrak{g} - T \rho^R (u, [Tv,x]_\mathfrak{g}) + T \rho^R (u, T \rho^R (v,x)) \\
&- \cancel{[[Tu, Tv]_\mathfrak{g}, x]_\mathfrak{g}  }
+ T \rho^R \big(  \rho^L (Tu, v) + \rho^R (u, Tv) + \lambda [u, v]_\mathfrak{h}, x \big) \\
&- \cancel{[Tv, [Tu,x]_\mathfrak{g} ]_\mathfrak{g}} + [Tv, T \rho^R (u,x) ]_\mathfrak{g} + T \rho^R (v, [Tu, x]_\mathfrak{g}) - T \rho^L (v, T \rho^R (u,x)) \\
&= - T \big( \rho^L (Tu, \rho^R (v,x)) + \rho^R (u, T \rho^R (v,x)) + \lambda [u, \rho^R (v,x)]_\mathfrak{h}   \big) + T \rho^R \big(  \rho^L (Tu, v) + \rho^R (u, Tv) + \lambda [u,v]_\mathfrak{h}, x \big)\\
&+ T \big( \rho^L (Tv, \rho^R (u,x)) + \rho^R (v, T\rho^R (u,x)) + \lambda [v, \rho^R (u,x)]_\mathfrak{h}  \big) + T (\rho^R (v, [Tu,x]_\mathfrak{g})) - T (\rho^R (v, T \rho^R (u,x))) \\
&= 0.
\end{align*}
Similarly, 
\begin{align*}
&\varrho^L (u, \varrho^R (x, v)) - \varrho^R (\varrho^L (u,x), v) - \varrho^R (x, [u,v]_T) \\
&= \varrho^L (u, [x, Tv]_\mathfrak{g} - T \rho^L (x, v)) -\varrho^R ([Tu,x]_\mathfrak{g} - T \rho^R (u,x), v) - [x, T [u,v]_T ]_\mathfrak{g} + T \rho^L (x, [u, v]_T) \\
&= \cancel{[Tu, [x, Tv]_\mathfrak{g} ]_\mathfrak{g}} - [Tu, T \rho^L (x, v)]_\mathfrak{g} - T \rho^R (u, [x, Tv]_\mathfrak{g}) + T \rho^R (u, T \rho^L (x, v)) \\
&- \cancel{[[Tu, x]_\mathfrak{g}, Tv]_\mathfrak{g}} + T \rho^L ([Tu, x]_\mathfrak{g}, v)  + [ T \rho^R (u, x), Tv]_\mathfrak{g} - T \rho^L (T \rho^R (u,x), v) \\
&- \cancel{[x, [Tu, Tv]_\mathfrak{g} ]_\mathfrak{g}} + T \rho^L \big( x, \rho^L (Tu, v)) + \rho^R (u, Tv) + \lambda [u, v]_\mathfrak{h} \big) \\
&= - T \big(  \rho^L (Tu, \rho^L (x,v))  + \rho^R (u, T \rho^L (x,v)) + \lambda [u, \rho^L (x,v)]_\mathfrak{h} \big) - T \rho^R (u, [x, Tv]_\mathfrak{g}) + T \rho^R (u, T \rho^L (x, v)) \\
&+ T \rho^L ([Tu,x]_\mathfrak{g} , v) + T \big( \rho^L (T \rho^R (u,x), v) + \rho^R (\rho^R (u,x), Tv) + \lambda [\rho^R (u,x), v]_\mathfrak{h}  \big) - T \rho^L (T \rho^R (u,x), v) \\
&+ T \rho^L \big(  x, \rho^L (Tu,v) + \rho^R (u, Tv) + \lambda [u,v]_\mathfrak{h} \big) \\
&= 0
\end{align*}
and
\begin{align*}
&\varrho^L (x, [u,v]_T) - \varrho^R (\varrho^R (x, u), v) - \varrho^L (u, \varrho^R (x, v)) \\
&= [x, T[u,v]_T ]_\mathfrak{g} - T \rho^R (x, [u,v]_T) - \varrho^R ([x, Tu]_\mathfrak{g} - T \rho^L (x, u), v) - \varrho^L (u, [x, Tv]_\mathfrak{g} - T \rho^L (x, v)) \\
&= \cancel{[x, [Tu, Tv]_\mathfrak{g} ]_\mathfrak{g}} - T \rho^R \big(  x, \rho^L (Tu,v) + \rho^R (u, Tv) + \lambda [u,v]_\mathfrak{h} \big) \\
&- \cancel{[[x, Tu]_\mathfrak{g}, Tv]_\mathfrak{g}} + T \rho^L ([x, Tu]_\mathfrak{g}, v) + [ T \rho^L (x, u), Tv]_\mathfrak{g} - T \rho^L (T \rho^L (x, u), v) \\
&- \cancel{[Tu, [x, Tv]_\mathfrak{g} ]_\mathfrak{g}} + T \rho^R (u, [x, Tv]_\mathfrak{g}) + [Tu, T \rho^L (x, v) ]_\mathfrak{g} - T \rho^R (u, T \rho^L (x, v)) \\
&= - T \rho^R \big(  x, \rho^L (Tu,v) + \rho^R (u, Tv) + \lambda [u,v]_\mathfrak{h} \big)\\
&+ T \rho^L ([x, Tu]_\mathfrak{g}, v) + T \big( \rho^L (T \rho^L (x, u), v) + \rho^R (\rho^L (x,u), TV) + \lambda [\rho^L (x, u), v]_\mathfrak{h}  \big) - T \rho^L (T \rho^L (x, u), v) \\
&+  T \rho^R (u, [x, Tv]_\mathfrak{g})  + T \big( \rho^L (Tu, \rho^L (x,v)) + \rho^R (u, T \rho^L (x, v)) + \lambda [u, \rho^L (x,v)]_\mathfrak{h}  \big) - T \rho^R (u, T \rho^L (x, v)) \\
&= 0.
\end{align*}
This proves that $(\mathfrak{g}, \varrho^L, \varrho^R)$ is a representation of the Leibniz algebra $\mathfrak{h}_T$.
\end{proof}

Therefore, by the previous proposition, we can consider the cochain complex $\{ C^\ast_\mathrm{Leib} (\mathfrak{h}_T, \mathfrak{g}), \delta_\mathrm{Leib} \}$  of the Leibniz algebra $\mathfrak{h}_T$ with coefficients in the representation $(\mathfrak{g}, \varrho^L, \varrho^R)$. More precisely, $C^n_\mathrm{Leib}(\mathfrak{h}_T, \mathfrak{g}) = \mathrm{Hom}(\mathfrak{h}^{\otimes n}, \mathfrak{g})$, for $n \geq 0$, and the differential $\delta_\mathrm{Leib} : C^n_\mathrm{Leib}(\mathfrak{h}_T, \mathfrak{g}) \rightarrow C^{n+1}_\mathrm{Leib}(\mathfrak{h}_T, \mathfrak{g})$ given by
\begin{align}\label{www-l}
&(\delta_\mathrm{Leib} f) (u_1, \ldots, u_{n+1})\\
&= \sum_{i=1}^n (-1)^{i+1} \big(  [Tu_i, f (u_1, \ldots, \widehat{u_i}, \ldots, u_{n+1})]_\mathfrak{g} - T (\rho^R (u_i, f (u_1, \ldots, \widehat{u_i}, \ldots, u_{n+1}) )) \big) \nonumber \\
&+ (-1)^{n+1} \big( [f (u_1, \ldots, u_n), T(u_{n+1})]_\mathfrak{g} - T (\rho^L ( f (u_1, \ldots, u_n), u_{n+1}  ))  \big) \nonumber \\
&+ \sum_{1 \leq i < j \leq n+1} (-1)^{i} f (u_1, \ldots, \widehat{u_i}, \ldots, u_{j-1}, \rho^L (Tu, v)+ \rho^R (u, Tv) + \lambda [u,v]_\mathfrak{h}, u_{j+1}, \ldots, u_{n+1}). \nonumber
\end{align}
Let $Z^n_\mathrm{Leib} (\mathfrak{h}_T, \mathfrak{g})$ and $B^n_\mathrm{Leib} (\mathfrak{h}_T, \mathfrak{g})$ be the space of $n$-cocycles and $n$-coboundaries, respectively. Then we have $B^n_\mathrm{Leib} (\mathfrak{h}, \mathfrak{g}) \subset Z^n_\mathrm{Leib} (\mathfrak{h}, \mathfrak{g})$. The corresponding quotients
\begin{align*}
H^n_\mathrm{Leib} (\mathfrak{h}_T, \mathfrak{g}) := \frac{ Z^n_\mathrm{Leib} (\mathfrak{h}_T, \mathfrak{g}) }{ B^n_\mathrm{Leib} (\mathfrak{h}_T, \mathfrak{g}) },~ \text{ for } n \geq 0
\end{align*}
are called the cohomology groups of the $\lambda$-weighted relative Rota-Baxter operator $T$.

\begin{remark}
It follows from the expressions of (\ref{dt-formula}) and (\ref{www-l}) that $d_T f = (-1)^{n} \delta_\mathrm{Leib}f$, for $f \in \mathrm{Hom} (\mathfrak{h}^{\otimes n}, \mathfrak{g})$ with $n \geq 1$. Hence the cohomology groups induced by the differential $d_T$ are isomorphic to the cohomology groups of $T$.
\end{remark}

%\begin{thm}
%Let $T : \mathfrak{h} \rightarrow \mathfrak{g}$ be a $\lambda$-weighted relative Rota-Baxter operator. Then 
%\begin{align*}
%H^n_T (\mathfrak{h}, \mathfrak{g}) \cong H^n_\mathrm{Leib} (\mathfrak{h}_T, \mathfrak{g}), ~\text{ for } n \geq 0.
%\end{align*}
%In other words, the cohomology of $T$ is isomorphic to the cohomology of the Leibniz algebra $\mathfrak{h}_T$ with coefficients in the representation $(\mathfrak{g}, \varrho^L, \varrho^R)$.
%\end{thm}
%
%\begin{proof}
%...................................
%\end{proof}

\section{Deformations of weighted relative Rota-Baxter operators}\label{sec-5}
In this section, we studey formal and finite order deformations of a $\lambda$-weighted relative Rota-Baxter operator in terms of the cohomology theory introduced in the previous section. In particular, we find a sufficient condition for the rigidity of a $\lambda$-weighted relative Rota-Baxter operator, and find a obstruction class for the extensibility of a finite order deformation.

\medskip

\noindent {\bf Formal deformations.} Let $\mathfrak{g}, \mathfrak{h}$ be two Leibniz algebras and $\mathfrak{h}$ be a Leibniz $\mathfrak{g}$-representation. We consider the spaces $\mathfrak{g}[[t]]$ and $\mathfrak{h}[[t]]$ of formal power series in $t$ with coefficients from $\mathfrak{g}$ and $\mathfrak{h}$, respectively. Note that $\mathfrak{g}[[t]]$ and $\mathfrak{h}[[t]]$ are both ${\bf k}[[t]]$-modules. It is easy to see that the Leibniz algebra structures on $\mathfrak{g}$ and $\mathfrak{h}$ can be extended by ${\bf k}[[t]]$-bilinearity to Leibniz algebra structures on $\mathfrak{g}[[t]]$ and $\mathfrak{h}[[t]]$, respectively. Moreover, the Leibniz $\mathfrak{g}$-representation on $\mathfrak{h}$ also induces a Leibniz $\mathfrak{g}[[t]]$-representation on $\mathfrak{h}[[t]]$.

\begin{defn}
Let $T: \mathfrak{h} \rightarrow \mathfrak{g}$ be a $\lambda$-weighted relative Rota-Baxter operator. A {\bf formal deformation} of $T$ consists of a formal sum $T_t = \sum_{i=0}^\infty t^i T_i \in \mathrm{Hom}(\mathfrak{h}, \mathfrak{g})[[t]]$ with $T_0 = T$, such that the ${\bf k}[[t]]$-linear map $T_t : \mathfrak{h}[[t]] \rightarrow \mathfrak{g}[[t]]$ is a $\lambda$-weighted relative Rota-Baxter operator (on $\mathfrak{h}[[t]]$ over the Leibniz algebra $\mathfrak{g}[[t]]$).
\end{defn}

It follows from the above definition that $T_t$ is a formal deformation of $T$ if and only if
\begin{align}\label{def-eqns}
\sum_{i+j = n} [ T_i (u), T_j (v)]_\mathfrak{g} = \sum_{i+j = n} T_i \big( \rho^L (T_j (u), v) + \rho^R (u, T_j (v)) \big) + \lambda T_n ([u,v]_\mathfrak{h}),
\end{align}
for all $u, v \in \mathfrak{h}$ and $n \geq 0$. The above system of equations are called deformation equations. Note that (\ref{def-eqns}) holds for $n =0$ as $T_0 = T$ is a $\lambda$-weighted relative Rota-Baxter operator. However, for $n =1$, we get that
\begin{align*}
[Tu, T_1 v]_\mathfrak{g} + [T_1 u, Tv]_\mathfrak{g} = T (\rho^L (T_1 u, v) + \rho^R (u, T_1 v)) + T_1 \big( \rho^L (Tu,v) + \rho^R (u, Tv) + \lambda [u,v]_\mathfrak{h}  \big).
\end{align*}
This implies that $\delta_\mathrm{Leib} (T_1) = 0$ (follows from (\ref{www-l})). In other words, $T_1$ is a $1$-cocycle in the cohomology complex of $T$. This is called the infinitesimal of the deformation.

\begin{defn}
Two deformations $T_t$ and $T_t'$ of a $\lambda$-weighted relative Rota-Baxter operator $T$ are said to be equivalent if there exists an element $x_0 \in \mathfrak{g}$ and linear maps $\phi_i \in \mathrm{Hom}(\mathfrak{g}, \mathfrak{g})$, $\psi_i \in \mathrm{Hom}(\mathfrak{h}, \mathfrak{h})$ for $i \geq 2$, such that the pair of maps
\begin{align*}
\big( \Phi_t = \mathrm{id}_\mathfrak{g} + t [x_0, -]_\mathfrak{g} + \sum_{i \geq 2} t^i \phi_i : \mathfrak{g}[[t]] \rightarrow \mathfrak{g}[[t]] ~, ~ \Psi_t = \mathrm{id}_\mathfrak{h} + t \rho^L (x_0, -) + \sum_{i \geq 2} t^i \psi_i  : \mathfrak{h}[[t]] \rightarrow \mathfrak{h} [[t]] \big)
\end{align*}
is a morphism of $\lambda$-weighted relative Rota-Baxter operators from $T_t$ to $T_t'$.
\end{defn}

Therefore, if $T_t$ and $T_t'$ are equivalent, then we must have
\begin{align*}
(\Phi_t \circ T_t) (u) = (T_t' \circ \Psi_t)(u), \quad
\Psi_t (\rho^L (x,u)) = \rho^L (\Phi_t (x), \Psi_t (u)) ~~~~ \text{ and } ~~~~ \Psi_t (\rho^R (u,x)) = \rho^R (\Psi_t (u), \Phi_t (x)),
\end{align*}
for $x \in \mathfrak{g}$, $u \in \mathfrak{h}$. By expanding the identity $(\Phi_t \circ T_t) (u) = (T_t' \circ \Psi_t)(u)$ and equating coefficients of $t$ from both sides, we get
\begin{align*}
T_1 (u) - T_1' (u) = T \rho^L (x_0, u) - [x_0, Tu]_\mathfrak{g} = \delta_\mathrm{Leib} (x_0) (u).
\end{align*}
As a summary of the above discussions, we get the following.

\begin{thm}\label{inf-1co}
Let $T: \mathfrak{h} \rightarrow \mathfrak{g}$ be a $\lambda$-weighted relative Rota-Baxter operator. If $T_t = \sum_{i=0}^\infty t^i T_i$ is a formal deformation of $T$, then $T_1$ is a $1$-cocycle in the cohomology complex of $T$. Moreover, the corresponding cohomology class depends only on the equivalence class of the deformation $T_t$.
\end{thm}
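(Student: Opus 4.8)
The plan is to prove both assertions by comparing coefficients of powers of $t$ in the governing identities and then recognizing the resulting linear expressions as the cocycle and coboundary conditions coming from the differential $\delta_\mathrm{Leib}$ of (\ref{www-l}). For the first assertion, I would extract the coefficient of $t^1$ in the deformation equations (\ref{def-eqns}), that is, the case $n=1$. Using $T_0 = T$, this produces the identity relating $[Tu, T_1 v]_\mathfrak{g} + [T_1 u, Tv]_\mathfrak{g}$ to the corresponding $T$- and $T_1$-terms. The key observation is that, after evaluating the general formula (\ref{www-l}) at $n=1$ on the $1$-cochain $T_1 \in \mathrm{Hom}(\mathfrak{h}, \mathfrak{g})$, this identity is exactly $\delta_\mathrm{Leib}(T_1) = 0$. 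Hence $T_1$ is a $1$-cocycle in $Z^1_\mathrm{Leib}(\mathfrak{h}_T, \mathfrak{g})$, defining a class $[T_1] \in H^1_\mathrm{Leib}(\mathfrak{h}_T, \mathfrak{g})$.

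For the second assertion, let $T_t$ and $T_t'$ be equivalent deformations, with the equivalence realized by the pair $(\Phi_t, \Psi_t)$ of the stated form. I would expand the morphism relation $\Phi_t \circ T_t = T_t' \circ \Psi_t$ and compare the coefficients of $t^1$. Since $\Phi_t = \mathrm{id}_\mathfrak{g} + t[x_0, -]_\mathfrak{g} + O(t^2)$, $\Psi_t = \mathrm{id}_\mathfrak{h} + t\rho^L(x_0, -) + O(t^2)$, and $T_0 = T_0' = T$, only the primitive $x_0$ survives at this order; the higher maps $\phi_i, \psi_i$ with $i \geq 2$ do not contribute. This yields $T_1 - T_1' = T\rho^L(x_0, -) - [x_0, T(-)]_\mathfrak{g}$. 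Evaluating (\ref{www-l}) at $n=0$ on the $0$-cochain $x_0 \in C^0_\mathrm{Leib}(\mathfrak{h}_T, \mathfrak{g}) = \mathfrak{g}$ shows the right-hand side is precisely $\delta_\mathrm{Leib}(x_0)$, so $T_1$ and $T_1'$ differ by a coboundary and $[T_1] = [T_1']$ in $H^1_\mathrm{Leib}(\mathfrak{h}_T, \mathfrak{g})$.

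The computations are all routine coefficient extraction; the only points requiring care are the bookkeeping of signs when specializing the general differential (\ref{www-l}) to the degenerate degrees $n=0$ and $n=1$, and confirming that the stated first-order parts of $\Phi_t$ and $\Psi_t$ are admissible, i.e. that $[x_0, -]_\mathfrak{g}$ is a derivation of $\mathfrak{g}$ (immediate from the Leibniz identity) and that $\rho^L(x_0, -)$ is compatible with the $\mathfrak{g}$-actions, so that $(\Phi_t, \Psi_t)$ genuinely defines a first-order morphism of $\lambda$-weighted relative Rota-Baxter operators. I expect the matching of signs in (\ref{www-l}) at low degree to be the main, though minor, obstacle.
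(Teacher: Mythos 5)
Your proposal is correct and follows essentially the same route as the paper: extract the coefficient of $t$ in the deformation equations (\ref{def-eqns}) to identify $\delta_\mathrm{Leib}(T_1)=0$, and extract the coefficient of $t$ in $\Phi_t\circ T_t=T_t'\circ\Psi_t$ to get $T_1-T_1'=T\rho^L(x_0,-)-[x_0,T(-)]_\mathfrak{g}=\delta_\mathrm{Leib}(x_0)$. The only superfluous step is your worry about admissibility of $(\Phi_t,\Psi_t)$; for this theorem the equivalence is assumed, so nothing needs to be verified there.
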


\medskip

\begin{defn}
A $\lambda$-weighted relative Rota-Baxter operator $T$ is said to be rigid if any formal deformation $T_t$ is equivalent to the undeformed one $T_t' = T.$
\end{defn}

Next, we introduce certain specific elements associated to $T$ whose importance will be clear in the next theorem.

\begin{defn}
An element $x_0 \in \mathfrak{g}$ is said to be a {\bf Nijenhuis element} associated with $T$ if 
\begin{align*}
[[x_0, x]_\mathfrak{g}, [x_0, y]_\mathfrak{g}]_\mathfrak{g} = 0, \quad [\rho^L (x_0, u), \rho^L (x_0, v)]_\mathfrak{h} = 0, \\
\rho^L ([x_0, x]_\mathfrak{g}, \rho^L (x_0, u)) = 0, \quad \rho^R (\rho^L (x_0, u), [x_0, x]_\mathfrak{g}) = 0,
\end{align*}
for $x, y \in \mathfrak{g}$ and $u, v \in \mathfrak{h}$. We denote the set of all Nijenhuis elements by $\mathrm{Nij}(T).$ 
\end{defn}

Note that any element lying in the intersection
\begin{align*}
\{ x_0 \in \mathfrak{g} ~|~ [x_0, x]_\mathfrak{g} = 0, \forall x \in \mathfrak{g}  \} \cap \{ x_0 \in \mathfrak{g} ~|~ \rho^L (x_0, u ) = 0, \forall u \in \mathfrak{h} \}
\end{align*}
belongs to $\mathrm{Nij}(T).$ We now have the following interesting result.

\begin{thm}
Let $T: \mathfrak{h} \rightarrow \mathfrak{g}$ be a $\lambda$-weighted relative Rota-Baxter operator. If $Z^1 (\mathfrak{h}_T, \mathfrak{g}) = \delta_\mathrm{Leib} (\mathrm{Nij}(T))$ then $T$ is rigid.
\end{thm}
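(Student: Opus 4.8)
The plan is to start from an arbitrary formal deformation $T_t = \sum_{i \geq 0} t^i T_i$ of $T$ and to show, by cancelling its positive-degree coefficients one at a time, that it is equivalent to the trivial deformation $T_t' = T$. The mechanism is that a Nijenhuis element yields, for every exponent $p \geq 1$, an explicit self-equivalence whose effect on the lowest surviving coefficient is to subtract a Leibniz coboundary; the hypothesis $Z^1(\mathfrak{h}_T, \mathfrak{g}) = \delta_\mathrm{Leib}(\mathrm{Nij}(T))$ guarantees that this coboundary can be chosen to equal the coefficient we wish to remove.

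The first step I would carry out is a key lemma: for $x_0 \in \mathrm{Nij}(T)$ and $p \geq 1$, the pair
$$\Phi_t = \mathrm{id}_\mathfrak{g} + t^p [x_0, -]_\mathfrak{g}, \qquad \Psi_t = \mathrm{id}_\mathfrak{h} + t^p \rho^L(x_0, -)$$
consists of Leibniz algebra automorphisms of $\mathfrak{g}[[t]]$ and $\mathfrak{h}[[t]]$ that together intertwine the left and right $\mathfrak{g}$-actions, i.e. $\Psi_t \rho^L(x,u) = \rho^L(\Phi_t x, \Psi_t u)$ and $\Psi_t \rho^R(u,x) = \rho^R(\Psi_t u, \Phi_t x)$. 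Expanding each of these four identities, the coefficient of $t^p$ vanishes automatically by the Leibniz identity (\ref{leibniz-identity}), the Leibniz-representation axiom (\ref{lrep-3}), and the representation axioms (\ref{rep1}) and (\ref{rep2}), respectively, while the coefficient of $t^{2p}$ is, in the same order, precisely $[[x_0,x]_\mathfrak{g},[x_0,y]_\mathfrak{g}]_\mathfrak{g}$, $[\rho^L(x_0,u),\rho^L(x_0,v)]_\mathfrak{h}$, $\rho^L([x_0,x]_\mathfrak{g},\rho^L(x_0,u))$, and $\rho^R(\rho^L(x_0,u),[x_0,x]_\mathfrak{g})$; these are exactly the four defining relations of a Nijenhuis element, so they vanish. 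Invertibility of $\Phi_t$ and $\Psi_t$ over ${\bf k}[[t]]$ is automatic since each is $\mathrm{id} + O(t^p)$.

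With the lemma available, the inductive removal runs as follows. Suppose I have a deformation equivalent to $T_t$ of the form $T + t^p T_p + O(t^{p+1})$ with $p \geq 1$; for $p=1$ this is $T_t$ itself. Reading the deformation equation (\ref{def-eqns}) at order $n=p$, and using that the coefficients in degrees $1, \ldots, p-1$ vanish, collapses it to the single relation $\delta_\mathrm{Leib}(T_p)=0$, so $T_p \in Z^1(\mathfrak{h}_T, \mathfrak{g})$ (the computation is formally identical to the $n=1$ case already recorded before Theorem \ref{inf-1co}). By hypothesis I may write $T_p = \delta_\mathrm{Leib}(x_0)$ with $x_0 \in \mathrm{Nij}(T)$. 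Applying the key lemma with this $x_0$ and exponent $p$, I set $\overline{T}_t := \Phi_t \circ T_t \circ \Psi_t^{-1}$; transporting the Rota-Baxter identity (\ref{lw}) through the automorphisms by means of the intertwining relations shows that $\overline{T}_t$ is again a $\lambda$-weighted relative Rota-Baxter operator and that $(\Phi_t, \Psi_t)$ is an equivalence from $T_t$ to $\overline{T}_t$. Because $\Phi_t - \mathrm{id}_\mathfrak{g}$ and $\Psi_t^{-1} - \mathrm{id}_\mathfrak{h}$ are $O(t^p)$, the coefficients of $\overline{T}_t$ in degrees $< p$ agree with those of $T_t$, while its degree-$p$ coefficient is $T_p + [x_0, T(-)]_\mathfrak{g} - T\rho^L(x_0, -) = T_p - \delta_\mathrm{Leib}(x_0) = 0$. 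Hence $\overline{T}_t = T + O(t^{p+1})$.

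Iterating produces a sequence of equivalences whose $p$-th factor is $\mathrm{id} + O(t^p)$; their composite therefore converges in the $t$-adic topology to a single pair $(\Phi_t^\infty, \Psi_t^\infty)$, whose degree-one coefficients retain the required form $[x_0, -]_\mathfrak{g}$ and $\rho^L(x_0, -)$, realizing an equivalence between $T_t$ and $T_t' = T$. This proves that $T$ is rigid. I expect the main obstacle to be the verification of the key lemma — carefully matching, term by term, the four $t^{2p}$ obstructions with the four Nijenhuis conditions — since the degree-by-degree cancellation and the convergence argument are then routine; a secondary point requiring care is confirming that conjugation by a morphism pair sends a Rota-Baxter operator to a Rota-Baxter operator.
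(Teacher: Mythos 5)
Your proof is correct and follows essentially the same route as the paper: kill the lowest surviving coefficient $T_p$ by conjugating with the pair $\big(\mathrm{id}_\mathfrak{g}+t^p[x_0,-]_\mathfrak{g},\ \mathrm{id}_\mathfrak{h}+t^p\rho^L(x_0,-)\big)$ built from a Nijenhuis element satisfying $\delta_{\mathrm{Leib}}(x_0)=T_p$, then iterate and pass to the $t$-adic limit. The paper compresses the induction into ``by repeating the above argument,'' whereas you make explicit the general-exponent lemma and the matching of the four $t^{2p}$ obstructions with the four defining conditions of $\mathrm{Nij}(T)$ --- but the underlying argument is the same.
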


\begin{proof}
Let $T_t = \sum_{i=0}^\infty t^i T_i$ be a formal deformation of $T$. Then by Theorem \ref{inf-1co}, we get that $T_1 \in Z^1_\mathrm{Leib} (\mathfrak{h}_T, \mathfrak{g})$ is a $1$-cocycle in the cohomology complex of $T$. Hence by hypothesis, there exists an element $x_0 \in \mathrm{Nij}(T)$ such that $T_1 = \delta_\mathrm{Leib} (x_0)$ (i.e. $T_1 (u) = - [x_0, Tu]_\mathfrak{g} + T \rho^L (x_0, u)$ for all $u \in \mathfrak{h}$). Sett $\Phi_t = \mathrm{id}_\mathfrak{g} + t [x_0, -]_\mathfrak{g}$ and $\Psi_t = \mathrm{id}_\mathfrak{h} + t \rho^L (x_0, -)$, and define $T_t' = \Phi_t \circ T_t \circ \Psi_t^{-1}$. Then $T_t'$ is a formal deformation of $T$. Since $x_0 \in \mathrm{Nij}(T)$, it follows that $\Phi_t$ and $\Psi_t$ are Leibniz algebra morphisms, also $\Psi_t (\rho^L (x,u)) = \rho^L (\Phi_t(x), \Psi_t (u))$ and  $\Psi_t (\rho^R (u,x)) = \rho^R ( \Psi_t (u), \Phi_t (x))$ hold. Hence $T_t'$ is equivalent to $T_t$. We also have
\begin{align*}
T_t' (u) =~& (\mathrm{id}_\mathfrak{g} + t [x_0, -]_\mathfrak{g}) \circ (\sum_{i=0}^\infty t^i T_i) (u - t \rho^L (x_0, u) + \text{powers of } t^{\geq 2}) \\
=~& (\mathrm{id}_\mathfrak{g} + t [x_0, -]_\mathfrak{g}) \big( Tu - t~ T \rho^L (x_0, u) + t T_1 (u) + \text{powers of } t^{\geq 2} \big) \\
=~& Tu + t \big(  T \rho^L (x_0, u) + T_1 (u) + [x_0, Tu]_\mathfrak{g} \big) + \text{powers of } t^{\geq 2}.
\end{align*}
It follows from the above expression that the coefficients of $t$ vanishes. By repeating the above argument, we get that $T_t$ is equivalent to $T$.
\end{proof}

\medskip

\noindent {\bf Finite order deformations.} Let $T : \mathfrak{h} \rightarrow \mathfrak{g}$ be a $\lambda$-weighted relative Rota-Baxter operator. For a fixed $N \in \mathbb{N}$, consider the spaces $\mathfrak{g}[[t]]/(t^{N+1})$ and $\mathfrak{h}[[t]]/(t^{N+1})$. Both of them are $\mathbf{k}[[t]]/(t^{N+1})$. Moreover, by $\mathbf{k}[[t]]/(t^{N+1})$-bilinearity, the Leibniz structures on $\mathfrak{g}$ and $\mathfrak{h}$ can be extended to $\mathfrak{g}[[t]]/(t^{N+1})$ and $\mathfrak{h}[[t]]/(t^{N+1})$, respectively. Further, $\mathfrak{h}[[t]]/(t^{N+1})$ is a Leibniz $\mathfrak{g}[[t]]/(t^{N+1})$-representation.

\begin{defn}
An order $N$ deformation of $T$ is given by a finite polynomial of the form $T_t^N = \sum_{i=0}^N t^i T_i \in \mathrm{Hom}(\mathfrak{h}, \mathfrak{g})[[t]]/ (t^{N+1})$ with $T_0 = T$, such that the $\mathbf{k}[[t]]/(t^{N+1})$-linear map $T_t^N : \mathfrak{h}[[t]]/(t^{N+1}) \rightarrow \mathfrak{g}[[t]]/(t^{N+1})$ is a $\lambda$-weighted relative Rota-Baxter operator.
\end{defn}

Thus, $T_t^N = \sum_{i=0}^N t^i T_i$ is an order $N$ deformation of $T$ if and only if the identity (\ref{def-eqns}) holds for $n = 0, 1, \ldots, N$. These can be equivalently written as
\begin{align}\label{fin-eq}
d_T (T_n) = - \frac{1}{2} \sum_{ \substack{i+j = n\\ i, j \geq 1}} \llbracket T_i, T_j \rrbracket, ~\text{for } n = 0, 1, \ldots, N.
\end{align}

\begin{defn}(Obstruction cochain) Let $T_t^N = \sum_{i=0}^N t^i T_i$ be an order $N$ deformation of $T$. We define a $2$-cochain $Ob_{T_t^N} \in C^2_T (\mathfrak{h}, \mathfrak{g})$, called the {\bf obstruction cochain}, by
\begin{align}
Ob_{T_t^N} = - \frac{1}{2} \sum_{\substack{i+j = N+1\\ i, j \geq 1}} \llbracket T_i, T_j \rrbracket.
\end{align}
\end{defn}

\begin{prop}
The obstruction cochain $Ob_{T^N_t}$ is a $2$-cocycle in the cohomology complex of $T$, i.e. $\delta_\mathrm{Leib} (Ob_{T^N_t}) = 0$ (equivalently, $d_T (Ob_{T^N_t}) = 0$).
\end{prop}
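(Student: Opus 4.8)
The plan is to work entirely inside the differential graded Lie algebra $(\oplus_{n\geq1}\mathrm{Hom}(\mathfrak{h}^{\otimes n},\mathfrak{g}),\llbracket~,~\rrbracket,d)$ of Theorem \ref{mc-thm} and to verify the equivalent identity $d_T(Ob_{T^N_t})=0$; the assertion $\delta_\mathrm{Leib}(Ob_{T^N_t})=0$ then follows immediately from the relation $d_Tf=(-1)^n\delta_\mathrm{Leib}f$ recorded above. Two structural facts drive the argument. First, since $T$ is a Maurer-Cartan element, the twisted map $d_T=d+\llbracket T,-\rrbracket$ squares to zero. Second, $d_T$ is a degree $1$ graded derivation of the bracket $\llbracket~,~\rrbracket$: the operator $d$ is a derivation by the computation carried out just before Theorem \ref{mc-thm}, while $\llbracket T,-\rrbracket$ is a derivation by the graded Jacobi identity, so their sum is too. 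I would use these facts without further comment.

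First I would apply $d_T$ to the obstruction cochain and push it through the bracket. Since every $T_i$ lies in $\mathrm{Hom}(\mathfrak{h},\mathfrak{g})$ and hence has degree $1$, the graded Leibniz rule gives
\begin{align*}
d_T\llbracket T_i,T_j\rrbracket=\llbracket d_TT_i,T_j\rrbracket-\llbracket T_i,d_TT_j\rrbracket.
\end{align*}
Summing over $i+j=N+1$ with $i,j\geq1$ and using graded antisymmetry (which, for a degree $1$ entry $T_j$ and a degree $2$ entry $d_TT_i$, turns $\llbracket T_j,d_TT_i\rrbracket$ into $-\llbracket d_TT_i,T_j\rrbracket$) collapses the two families of terms into a single one, yielding
\begin{align*}
d_T(Ob_{T^N_t})=-\sum_{\substack{i+j=N+1\\i,j\geq1}}\llbracket d_TT_i,T_j\rrbracket.
\end{align*}
Because $1\leq i\leq N$ for every such $i$, the finite order deformation equations (\ref{fin-eq}) apply and let me replace each $d_TT_i$ by $-\frac{1}{2}\sum_{p+q=i,\,p,q\geq1}\llbracket T_p,T_q\rrbracket$. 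The right-hand side then becomes $\frac{1}{2}$ times a triple sum
\begin{align*}
\sum_{\substack{p+q+j=N+1\\p,q,j\geq1}}\llbracket\llbracket T_p,T_q\rrbracket,T_j\rrbracket
\end{align*}
taken over all ordered triples of positive indices.

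It remains to show that this triple sum vanishes, and here I would invoke the graded Jacobi identity. Since all three entries have degree $1$, every sign in the Jacobi relation is trivial and it reads $\llbracket\llbracket T_p,T_q\rrbracket,T_j\rrbracket+\llbracket\llbracket T_q,T_j\rrbracket,T_p\rrbracket+\llbracket\llbracket T_j,T_p\rrbracket,T_q\rrbracket=0$. The index set $\{(p,q,j):p+q+j=N+1,\ p,q,j\geq1\}$ is stable under cyclic permutation, so relabelling shows the triple sum equals each of its two cyclic rotations; adding the three equal copies and applying the Jacobi relation termwise shows that three times the sum is zero, whence the sum itself vanishes since the field has characteristic $0$. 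This gives $d_T(Ob_{T^N_t})=0$, and hence $\delta_\mathrm{Leib}(Ob_{T^N_t})=0$.

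The main obstacle is the sign bookkeeping. One must fix the grading convention (an element of $\mathrm{Hom}(\mathfrak{h}^{\otimes n},\mathfrak{g})$ carries degree $n$, so each $T_i$ sits in degree $1$ and $Ob_{T^N_t}$ in degree $2$) and verify that the derivation, antisymmetry, and Jacobi signs conspire precisely so that the two families in the second display \emph{merge} rather than cancel and so that the cyclic symmetrization produces a clean factor of $3$. Once the degree $1$ simplifications are in place, every potentially awkward sign disappears and the remaining steps are formal.
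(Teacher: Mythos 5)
Your proposal is correct and follows essentially the same route as the paper: expand $d_T$ as a degree-one derivation of $\llbracket~,~\rrbracket$, substitute the order-$N$ deformation equations (\ref{fin-eq}), and reduce to a cyclically symmetric sum of iterated brackets of the $T_i$. The only differences are cosmetic — you merge the two families of terms by graded antisymmetry one step earlier than the paper does, and you spell out the cyclic graded-Jacobi argument for the final vanishing of $\sum_{p+q+j=N+1}\llbracket\llbracket T_p,T_q\rrbracket,T_j\rrbracket$, which the paper simply asserts.
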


\begin{proof}
We have
\begin{align*}
d_T ( - \frac{1}{2} \sum_{ \substack{ i+j = N+1\\ i, j \geq 1}} \llbracket T_i, T_j \rrbracket ) 
&=  - \frac{1}{2} \sum_{ \substack{  i+j = N+1 \\ i, j \geq 1}}   ( d ~\llbracket T_i, T_j \rrbracket + \llbracket T, \llbracket T_i, T_j \rrbracket \rrbracket ) \\
&= - \frac{1}{2} \sum_{\substack{  i+j = N+1\\ i, j \geq 1}}    \big( \llbracket d T_i, T_j \rrbracket -  \llbracket T_i, dT_j \rrbracket  + \llbracket \llbracket T, T_i \rrbracket, T_j \rrbracket  - \llbracket T_i, \llbracket T, T_j \rrbracket \rrbracket \big) \\
&=  - \frac{1}{2} \sum_{\substack{  i+j = N+1\\ i, j \geq 1}}    \big( \llbracket d_T T_i, T_j \rrbracket -  \llbracket T_i, d_T T_j \rrbracket \big)  \\
&= \frac{1}{4} \sum_{\substack{  i_1 +  i_2 + j = N+1\\ i_1, i_2, j \geq 1}} \llbracket  \llbracket T_{i_1}, T_{i_2} \rrbracket, T_j \rrbracket - \frac{1}{4} \sum_{\substack{  i + j_1 + j_2 = N+1\\ i, j_1, j_2 \geq 1}} \llbracket T_i, \llbracket T_{j_1}, T_{j_2} \rrbracket \rrbracket  \quad (\text{from } (\ref{fin-eq}))\\
&= \frac{1}{2} \sum_{\substack{  i + j + k = N+1\\ i, j, k \geq 1}} \llbracket  \llbracket T_{i}, T_{j} \rrbracket, T_k \rrbracket = 0.
\end{align*}
This completes the proof.
This proves the result.
\end{proof}

It follows from the above result that we obtain a cohomology class $[Ob_{T^N_t}] \in H^2_T (\mathfrak{h}, \mathfrak{g})$ associated to the order $N$ deformation $T^N_t.$ This cohomology class is called the obstruction class.

\begin{defn}
An order $N$ deformation $T^N_t = \sum_{i=0}^N t^i T_i$ is said to be extensible if there exists a linear map $T_{N+1} : \mathfrak{h} \rightarrow \mathfrak{g}$ which makes $T_t^{N+1} = T^N_t + t^{N+1} T_{N+1}$ into an order $N+1$ deformation.
\end{defn}

The following result gives a necessary and sufficient condition for the extensibility of a finite order deformation in terms of its obstruction class.

\begin{thm}
An order $N$ deformation $T^N_t$ is extensible if and only if the corresponding obstruction class $[Ob_{T^N_t}]$ vanishes.
\end{thm}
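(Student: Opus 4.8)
The plan is to prove the equivalence by extracting the single new deformation equation that distinguishes an order $N+1$ deformation from an order $N$ deformation, and showing this equation is solvable precisely when the obstruction class vanishes. First I would observe that, by definition, $T_t^{N+1} = T_t^N + t^{N+1} T_{N+1}$ is an order $N+1$ deformation if and only if the deformation equations \eqref{def-eqns}, equivalently the Maurer-Cartan equations \eqref{fin-eq}, hold for all $n = 0, 1, \ldots, N+1$. Since $T_t^N$ is already assumed to be an order $N$ deformation, equations \eqref{fin-eq} hold automatically for $n = 0, \ldots, N$; the only genuinely new constraint is the equation for $n = N+1$.

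Next I would write out this $n = N+1$ equation explicitly using \eqref{fin-eq}. The key point is to isolate the terms involving the unknown $T_{N+1}$. In the sum $\frac{1}{2}\sum_{i+j = N+1} \llbracket T_i, T_j \rrbracket$, the terms with $i = 0$ or $j = 0$ produce brackets $\llbracket T, T_{N+1} \rrbracket$ (via $T_0 = T$), which combine with $d(T_{N+1})$ to give exactly $d_T(T_{N+1}) = (d + \llbracket T, -\rrbracket)(T_{N+1})$. The remaining terms, where both $i, j \geq 1$, are precisely $Ob_{T_t^N}$ by its definition. Thus the $n = N+1$ deformation equation rearranges to the clean statement
\begin{align*}
d_T(T_{N+1}) = Ob_{T_t^N}.
\end{align*}
This identity is the crux: extensibility amounts to finding $T_{N+1} \in C^1_\mathrm{Leib}(\mathfrak{h}_T, \mathfrak{g})$ solving this linear equation.

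From here the equivalence is nearly immediate. If $T_t^N$ is extensible, then such a $T_{N+1}$ exists with $d_T(T_{N+1}) = Ob_{T_t^N}$, so $Ob_{T_t^N}$ is a coboundary, hence $[Ob_{T_t^N}] = 0$ in $H^2_T(\mathfrak{h}, \mathfrak{g})$. Conversely, if $[Ob_{T_t^N}] = 0$, then by definition of the cohomology there exists some $T_{N+1}$ with $d_T(T_{N+1}) = Ob_{T_t^N}$; setting $T_t^{N+1} = T_t^N + t^{N+1} T_{N+1}$ then satisfies \eqref{fin-eq} for $n = N+1$ as well, and hence is an order $N+1$ deformation. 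I would invoke the preceding proposition, which guarantees $Ob_{T_t^N}$ is a genuine $2$-cocycle, so that the cohomology class $[Ob_{T_t^N}]$ is well defined and the coboundary argument makes sense.

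The main obstacle, and the step requiring the most care, is the bookkeeping in the second step: correctly separating the $i=0$ and $j=0$ contributions from the $i,j \geq 1$ contributions, and verifying that the former assemble exactly into $d_T(T_{N+1})$ rather than into $d_T(T_{N+1})$ up to a sign or factor discrepancy. This hinges on the relation $d_T = d + \llbracket T, -\rrbracket$ together with the graded symmetry of the bracket $\llbracket T, T_{N+1} \rrbracket = \llbracket T_{N+1}, T \rrbracket$ (both $T$ and $T_{N+1}$ live in degree one of the shifted dgla), which ensures the $i=0$ and $j=0$ terms double up to cancel the factor $\frac{1}{2}$ correctly. Once this identification is pinned down, the cohomological conclusion follows formally, so essentially all the real content sits in deriving $d_T(T_{N+1}) = Ob_{T_t^N}$.
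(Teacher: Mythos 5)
Your proposal is correct and follows essentially the same route as the paper: the paper's (very terse) proof likewise reduces extensibility to the single equation $d_T(T_{N+1}) = Ob_{T^N_t}$, which is exactly the $n=N+1$ instance of the deformation equations (\ref{fin-eq}), and then concludes in both directions via the coboundary characterization. Your added bookkeeping about absorbing the $i=0$ and $j=0$ terms of $\tfrac{1}{2}\llbracket T_t, T_t\rrbracket$ into $d_T(T_{N+1})$ is precisely the content already packaged into (\ref{fin-eq}), so no new idea is needed.
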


\begin{proof}
Let $T^N_t = \sum_{i=0}^N t^i T_i$ be an extensible order $N$ deformation. Since there exists a linear map $T_{N+1}$ which makes $T^{N+1}_t = \sum_{i=0}^N t^i T_i$ into an order $N+1$ deformation, we have that $Ob_{T^N_t} = d_T (T_{N+1}).$ In other words, $Ob_{T^N_t}$ is a coboundary. Hence the cohomology class $[Ob_{T^N_t}]$ vanishes. The converse part follows by similar argument.
\end{proof}

\section{Post-Leibniz algebras}\label{sec-6}

In this section, we introduce post-Leibniz algebras as a non-skewsymmetric analogue of post-Lie algebras. We show that post-Leibniz algebras arise naturally from $\lambda$-weighted relative Rota-Baxter operators. Finally, we study some properties of post-Leibniz algebras.

\begin{defn}
A {\bf post-Leibniz algebra} is a quadruple $(\mathfrak{a},  \triangleleft, \triangleright, [~,~]_\mathfrak{a})$ consisting of a vector space $\mathfrak{a}$ together with three bilinear operations $ \triangleleft, \triangleright, [~,~]_\mathfrak{a} : \mathfrak{a} \otimes \mathfrak{a} \rightarrow \mathfrak{a}$ satisfying for $u, v, w \in \mathfrak{a}$,
\begin{align}
u \triangleleft [v , w]_\star =~& (u \triangleleft v) \triangleleft w + v \triangleright (u \triangleleft w),\label{post-l1}\\
u \triangleright (v \triangleleft w) =~& (u \triangleright v) \triangleleft w + v \triangleleft [u , w]_\star, \label{post-l2}\\
u \triangleright (v \triangleright w) =~& [u , v]_\star \triangleright w + v \triangleright (u \triangleright w), \label{post-l3}\\
u \triangleright [v,w]_\mathfrak{a} =~& [u \triangleright v, w]_\mathfrak{a} + [ v, u \triangleright w]_\mathfrak{a}, \label{post-l4}\\
[u. v \triangleright w]_\mathfrak{a} =~& [ u \triangleleft v, w]_\mathfrak{a} + v \triangleright [u, w]_\mathfrak{a}, \label{post-l5}\\
[u, v \triangleleft w]_\mathfrak{a} =~& [u,v]_\mathfrak{a} \triangleleft w + [v, u \triangleleft w]_\mathfrak{a}, \label{post-l6}\\
[u, [v,w]_\mathfrak{a}]_\mathfrak{a} =~& [[u,v]_\mathfrak{a}, w]_\mathfrak{a} + [v, [u, w]_\mathfrak{a}]_\mathfrak{a}, \label{post-l7}
\end{align}
where $[u , v]_\star = u \triangleleft v + u \triangleright v + [u, v]_\mathfrak{a}$.
\end{defn}

\begin{remark}
(i) The notion of post-Lie algebras was first introduced by Vallette \cite{vallette} in the operadic study of generalized partitioned posets. A post-Lie algebra is a triple $(\mathfrak{a}, \circ, [~,~])$ consisting of a vector space $\mathfrak{a}$ together with bilinear operations $\circ, [~,~] : \mathfrak{a} \otimes \mathfrak{a} \rightarrow \mathfrak{a}$ in which $[~,~]$ is skewsymmetric and satisfying the following identities
\begin{align*}
&(u \circ v) \circ w - u \circ (v \circ w) - (v \circ u) \circ w + v \circ (u \circ w) + [u,v] \circ w = 0,\\
&u \circ [v,w] = [u \circ v, w]+[v, u \circ w],\\
&[u, [v,w]] + [v, [w,u]] + [w, [u,v]] = 0 ~~~~~ (\text{Jacobi identity}).
\end{align*}
Next, let $(\mathfrak{a}, \triangleleft, \triangleright, [~,~]_\mathfrak{a})$ be a post-Leibniz algebra with the property that $u \triangleleft v = - v \triangleright u$ and $[u, v]_\mathfrak{a} = - [v,u]_\mathfrak{a}$ for all $u, v \in \mathfrak{a}$ (such post-Leibniz algebras are called `skewsymmetric'). In this case, it can be checked that $(\mathfrak{a}, \triangleright, [~,~]_\mathfrak{a})$ is a post-Lie algebra. Therefore, ordinary post-Leibniz algebras can be thought of as a non-skewsymmetric analogue of post-Lie algebras.

(ii) The notion of pre-Leibniz algebras was recently introduced in the study of weight zero relative Rota-Baxter operators on Leibniz algebras. More precisely, a pre-Leibniz algebra is a triple $(\mathfrak{a}, \triangleleft, \triangleright)$ consisting of a vector space $\mathfrak{a}$ together with bilinear operations $\triangleleft, \triangleright : \mathfrak{a} \otimes \mathfrak{a} \rightarrow \mathfrak{a}$ satisfying for $u, v, w \in \mathfrak{a}$,
\begin{align*}
u \triangleleft ( v \triangleleft w + v \triangleright w ) =~& ( u \triangleleft v ) \triangleleft w + v \triangleright (u \triangleleft w), \\
u \triangleright ( v \triangleleft w ) =~& ( u \triangleright v ) \triangleleft w + v \triangleleft ( u \triangleleft w + u \triangleright w),\\
u \triangleright ( v \triangleright w) =~& ( u  \triangleleft v + u \triangleright v) \triangleright w + v \triangleright ( u \triangleright w). 
\end{align*}
Therefore, it follows that any pre-Leibniz algebra $(\mathfrak{a}, \triangleleft , \triangleright)$ is a post-Leibniz algebra $(\mathfrak{a}, \triangleleft, \triangleright, [~,~]_\mathfrak{a} = 0)$.
\end{remark}

\begin{prop}
Let $(\mathfrak{a}, \triangleleft, \triangleright, [~,~]_\mathfrak{a})$ be a post-Leibniz algebra. Then $(\mathfrak{a}, [~,~]_\star)$ is a Leibniz algebra. This is called the total Leibniz algebra, denoted by $\mathfrak{a}_\mathrm{Tot}$.
\end{prop}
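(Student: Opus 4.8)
The plan is to verify directly that the bracket $[~,~]_\star$ satisfies the Leibniz identity (\ref{leibniz-identity}); that is, to show that
\[
E := [u, [v,w]_\star]_\star - [[u,v]_\star, w]_\star - [v, [u,w]_\star]_\star
\]
vanishes for all $u,v,w \in \mathfrak{a}$. The entire argument is a bookkeeping exercise: expand $E$ using the definition $[u,v]_\star = u \triangleleft v + u \triangleright v + [u,v]_\mathfrak{a}$ together with the bilinearity of the three operations, and then observe that the resulting terms fall into seven groups, each of which is precisely one of the defining axioms (\ref{post-l1})--(\ref{post-l7}).

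First I would expand the three outer brackets, being careful to keep the inner $[~,~]_\star$ \emph{unexpanded} in exactly those slots where it occurs in the axioms. Concretely, the first summand $[u,[v,w]_\star]_\star$ contributes $u \triangleleft [v,w]_\star + u \triangleright [v,w]_\star + [u,[v,w]_\mathfrak{a}]_\mathfrak{a}$-type terms; here I would leave $u \triangleleft [v,w]_\star$ intact (it is the left side of (\ref{post-l1})) but expand the remaining two occurrences of $[v,w]_\star$, producing the pure terms $u \triangleright (v \triangleleft w)$, $u \triangleright (v \triangleright w)$, $u \triangleright [v,w]_\mathfrak{a}$, $[u, v \triangleleft w]_\mathfrak{a}$, $[u, v \triangleright w]_\mathfrak{a}$, $[u, [v,w]_\mathfrak{a}]_\mathfrak{a}$. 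Similarly, from $-[[u,v]_\star, w]_\star$ I would keep $[u,v]_\star \triangleright w$ intact (the right side of (\ref{post-l3})) and expand the $\triangleleft$- and $[~,~]_\mathfrak{a}$-slots, while from $-[v,[u,w]_\star]_\star$ I would keep $v \triangleleft [u,w]_\star$ intact (the right side of (\ref{post-l2})) and expand the rest.

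After this expansion $E$ becomes a signed sum of twenty-one terms: seven with a plus sign (from the first summand) and seven with a minus sign from each of the other two summands. The key step is the matching: each of the seven positive terms is the left-hand side of exactly one axiom, and its two companion negative terms are precisely the two summands on that axiom's right-hand side. For instance, $u \triangleleft [v,w]_\star$ pairs with $-(u\triangleleft v)\triangleleft w$ and $-v \triangleright (u \triangleleft w)$ to give (\ref{post-l1}); the term $u \triangleright (v\triangleleft w)$ pairs with $-(u\triangleright v)\triangleleft w$ and $-v\triangleleft[u,w]_\star$ to give (\ref{post-l2}); and so on through (\ref{post-l7}), with the last three axioms accounting for the purely-$[~,~]_\mathfrak{a}$ and mixed terms. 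Hence $E$ is a sum of seven expressions, each vanishing by a single axiom, so $E = 0$ and $(\mathfrak{a}, [~,~]_\star)$ is a Leibniz algebra.

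The main obstacle is organizational rather than conceptual: one must track all twenty-one terms and their signs without omission or duplication, and in particular make the correct choice of which $[~,~]_\star$-slots to leave unexpanded, so that the three axioms (\ref{post-l1}), (\ref{post-l2}), (\ref{post-l3}) — which themselves still contain $[~,~]_\star$ — appear cleanly rather than being shredded into pure terms. Once this grouping is fixed, no auxiliary identities are needed: each of the seven blocks collapses to zero by one of the post-Leibniz axioms.
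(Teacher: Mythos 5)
Your proof is correct and is essentially the paper's argument read in the opposite direction: the paper simply adds the seven axioms (\ref{post-l1})--(\ref{post-l7}) and observes that the left-hand sides sum to $[u,[v,w]_\star]_\star$ while the right-hand sides sum to $[[u,v]_\star,w]_\star + [v,[u,w]_\star]_\star$, which is exactly your twenty-one-term matching with the bookkeeping run in reverse. Your grouping (including the choice of which $[~,~]_\star$-slots to leave unexpanded) is accurate, so there is no gap.
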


\begin{proof}
By adding the left hand sides of the identities (\ref{post-l1})-(\ref{post-l7}), we simply get that $[u, [v,w]_\star ]_\star$.  On the other hand, by adding the right hand sides of (\ref{post-l1})-(\ref{post-l7}), we get $[[u, v]_\star, w]_\star + [v, [u,w]_\star ]_\star$. Therefore, the Leibniz identity (\ref{leibniz-identity}) holds for the bracket $[~,~]_\star$.
\end{proof}

\begin{prop}\label{rrb-pl}
Let $\mathfrak{g}, \mathfrak{h}$ be two Leibniz algebras and $\mathfrak{h}$ be a Leibniz $\mathfrak{g}$-representation. Let $T: \mathfrak{h} \rightarrow \mathfrak{g}$ be a $\lambda$-weighted relative Rota-Baxter operator. Then $(\mathfrak{h}, \triangleleft, \triangleright, [~,~]^\lambda_\mathfrak{h})$ is a post-Leibniz algebra, where
\begin{align*}
u \triangleleft v = \rho^R (u, Tv), \quad u \triangleright v = \rho^L (Tu,v) ~~~~ \text{~~ and ~~ } ~~~~ [u, v]^\lambda_\mathfrak{h} = \lambda [u, v]_\mathfrak{h}, ~ \text{ for } u, v \in \mathfrak{h}.
\end{align*}
\end{prop}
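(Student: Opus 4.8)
The plan is to verify the seven defining identities (\ref{post-l1})--(\ref{post-l7}) directly, reducing each to a structure axiom that is already available. The decisive preliminary observation is that the star bracket built from these operations is exactly the induced bracket of Remark \ref{rmk-induced-leib}:
\[
[u,v]_\star = u \triangleleft v + u \triangleright v + [u,v]^\lambda_\mathfrak{h} = \rho^R(u,Tv) + \rho^L(Tu,v) + \lambda [u,v]_\mathfrak{h} = [u,v]_T .
\]
By that remark $T : \mathfrak{h}_T \to \mathfrak{g}$ is a morphism of Leibniz algebras, so $T[v,w]_\star = [Tv,Tw]_\mathfrak{g}$; this identity is precisely the defining equation (\ref{lw}) of a $\lambda$-weighted relative Rota-Baxter operator, and it is the only point at which the Rota-Baxter condition will be used.

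With this in hand, the first three identities collapse to bare representation axioms. For (\ref{post-l1}) the left side becomes $\rho^R(u,[Tv,Tw]_\mathfrak{g})$ after substituting $T[v,w]_\star = [Tv,Tw]_\mathfrak{g}$, while the right side is $\rho^R(\rho^R(u,Tv),Tw) + \rho^L(Tv,\rho^R(u,Tw))$, and these coincide by (\ref{rep3}) applied to $Tv, Tw \in \mathfrak{g}$. The same substitution turns (\ref{post-l2}) into (\ref{rep2}) and (\ref{post-l3}) into (\ref{rep1}), in each case with the $\mathfrak{g}$-entries being the images $Tu, Tv, Tw$.

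For the next three identities, $[~,~]_\mathfrak{a} = \lambda[~,~]_\mathfrak{h}$ contributes an overall factor $\lambda$ to every term, which cancels; after cancellation (\ref{post-l4}) is exactly (\ref{lrep-3}) with $x = Tu$, (\ref{post-l5}) is (\ref{lrep-2}) with $x = Tv$, and (\ref{post-l6}) is (\ref{lrep-1}) with $x = Tw$. Finally, (\ref{post-l7}) is the Leibniz identity for $\lambda[~,~]_\mathfrak{h}$, which holds because $[~,~]_\mathfrak{h}$ satisfies (\ref{leibniz-identity}) and replacing the bracket by $\lambda[~,~]_\mathfrak{h}$ scales both sides of that identity by $\lambda^2$.

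I expect no genuine obstacle here: the entire content is matching each post-Leibniz axiom with the correct structure equation, and the only nonformal input is the linearization $T[v,w]_\star = [Tv,Tw]_\mathfrak{g}$ used inside the $\triangleleft$ and $\triangleright$ operations of (\ref{post-l1})--(\ref{post-l3}). The one point to watch is careful bookkeeping of arguments -- in particular which slot of each action receives the image under $T$ -- so that each identity lands on the intended axiom among (\ref{rep1})--(\ref{rep3}) and (\ref{lrep-1})--(\ref{lrep-3}).
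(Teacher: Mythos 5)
Your proposal is correct and follows essentially the same route as the paper: both reduce each of (\ref{post-l1})--(\ref{post-l7}) to the corresponding axiom among (\ref{rep1})--(\ref{rep3}), (\ref{lrep-1})--(\ref{lrep-3}) and the Leibniz identity, with the Rota-Baxter condition entering only through $T[v,w]_\star = [Tv,Tw]_\mathfrak{g}$. Your axiom bookkeeping for (\ref{post-l4})--(\ref{post-l6}) (matching them to (\ref{lrep-3}), (\ref{lrep-2}), (\ref{lrep-1}) respectively) is in fact more accurate than the labels cited in the paper's own proof, which contains minor reference typos.
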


\begin{proof}
For any $u, v, w \in \mathfrak{h}$, we have
\begin{align*}
\rho^R (u, [Tv, Tw]_\mathfrak{g}) = \rho^R (u, T [v, w]_\star) = u \triangleleft [v,w]_\star.
\end{align*}
On the other hand,
\begin{align*}
\rho^R (\rho^R (u, Tv), Tw) + \rho^L (Tv, \rho^R (u, Tw)) = (u \triangleleft v) \triangleleft w + v \triangleright (u \triangleleft w).
\end{align*}
Since $(\rho^L, \rho^R)$ satisfies (\ref{rep3}), it follows that the right hand sides of the above equations are same. Hence (\ref{post-l1}) holds. Similarly, we observe that
\begin{align*}
&\rho^L (Tu, \rho^R (v, Tw)) = u \triangleright (v \triangleleft w),\\
&\rho^R (\rho^L (Tu, v), Tw) + \rho^R (v, [Tu, Tw]_\mathfrak{g}) = (u \triangleright v) \triangleleft w + v \triangleleft [u, w]_\star.
\end{align*}
Hence it follows from (\ref{rep2}) that the identity (\ref{post-l2}) holds. We also have
\begin{align*}
&\rho^L (Tu, \rho^L (Tv, w)) = u \triangleright (v \triangleright w),\\
& \rho^L ([Tu, Tv]_\mathfrak{g}, w) + \rho^L (Tv, \rho^L (Tu, w)) = [u, w]_\star \triangleright w + v \triangleright (u \triangleright w).
\end{align*}
Therefore, the identity (\ref{post-l3}) also holds. By the same way, we observe
\begin{align*}
&\rho^L (Tu, \lambda [v,w]_\mathfrak{h}) = u \triangleright [v,w]^\lambda_\mathfrak{h},\\
& \lambda [\rho^L (Tu, v), w]_\mathfrak{h} + \lambda [v, \rho^L (Tu, w)]_\mathfrak{h} = [u \triangleright v, w]_\mathfrak{h}^\lambda + [ v, u \triangleright w]_\mathfrak{h}^\lambda.
\end{align*}
It follows from (\ref{lrep-1}) that the identity (\ref{post-l4}) also holds. Similarly, we get the identities (\ref{post-l5}), (\ref{post-l7}) as we have (\ref{lrep-2}), (\ref{lrep-3}). Finally, the identity (\ref{post-l7}) automatically holds for the bracket $[~,~]_\mathfrak{h}^\lambda$ as the bracket $[~,~]_\mathfrak{h}$ satisfies the same. This completes the proof.
\end{proof}
 
In the previous proposition, we show that a $\lambda$-weighted relative Rota-Baxter operator induces a post-Leibniz algebra structure. Next, we prove the converse: any post-Leibniz algebra is always induced by a $1$-weighted relative Rota-Baxter operator.

Let $(\mathfrak{a}, \triangleleft , \triangleright , [~,~]_\mathfrak{a})$ be a post-Leibniz algebra. Consider the {total Leibniz algebra} $\mathfrak{a}_\mathrm{Tot} = (\mathfrak{a}, [~,~]_\star)$. We define maps $\varrho^L : \mathfrak{a}_\mathrm{Tot} \otimes \mathfrak{a} \rightarrow \mathfrak{a}$ and $\varrho^R : \mathfrak{a} \otimes \mathfrak{a}_\mathrm{Tot} \rightarrow \mathfrak{a}$ by
\begin{align*}
\varrho^L (u, v) = u \triangleright v ~~~ \text{ and } ~~~ \varrho^R (v, u) = v \triangleleft u, ~ \text{ for } u \in \mathfrak{a}_\mathrm{Tot}, v \in \mathfrak{a}.
\end{align*}
Then it can be checked that $\varrho^L, \varrho^R$ makes the Leibniz algebra $\mathfrak{a} = (\mathfrak{a}, [~,~]_\mathfrak{a})$ into a Leibniz $\mathfrak{a}_\mathrm{Tot}$-representation. With this notation, the identity map $\mathrm{id} : \mathfrak{a} \rightarrow \mathfrak{a}_\mathrm{Tot}$ is a $1$-weighted relative Rota-Baxter operator. Moreover, the induced post-Leibniz algebra structure on the vector space $\mathfrak{a}$ coincides with the given one.

\medskip

Given a Leibniz algebra, the following result gives a necessary and sufficient condition to have a compatible post-Leibniz algebra structure.

\begin{prop}
Let $(\mathfrak{g}, [~,~]_\mathfrak{g})$ be any Leibniz algebra. Then there is a compatible post-Leibniz algebra structure if and only if there exists a Leibniz $\mathfrak{g}$-representation $\mathfrak{h}$ and an invertible $1$-weighted relative Rota-Baxter operator $T: \mathfrak{h} \rightarrow \mathfrak{g}$.
\end{prop}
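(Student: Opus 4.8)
The plan is to prove the two implications separately, in each case reducing to the machinery already in place, namely Proposition \ref{rrb-pl} and the converse construction (the identity-map construction) preceding this statement. Throughout, by a \emph{compatible} post-Leibniz structure on $(\mathfrak{g}, [~,~]_\mathfrak{g})$ I mean a post-Leibniz algebra $(\mathfrak{g}, \triangleleft, \triangleright, [~,~]_\mathfrak{a})$ whose total Leibniz bracket $[~,~]_\star$ coincides with the given bracket $[~,~]_\mathfrak{g}$.

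For the \emph{if} direction, suppose $\mathfrak{h}$ is a Leibniz $\mathfrak{g}$-representation and $T : \mathfrak{h} \rightarrow \mathfrak{g}$ is an invertible $1$-weighted relative Rota-Baxter operator. By Proposition \ref{rrb-pl} applied with $\lambda = 1$, the space $\mathfrak{h}$ carries a post-Leibniz structure with $u \triangleleft v = \rho^R(u, Tv)$, $u \triangleright v = \rho^L(Tu, v)$, and bracket $[u,v]_\mathfrak{h}$; its total Leibniz bracket is exactly the induced bracket $[~,~]_T$ of Remark \ref{rmk-induced-leib}. Since $T$ is invertible, I would transport this structure along $T$ to $\mathfrak{g}$, setting $x \triangleleft_\mathfrak{g} y := T(T^{-1}x \triangleleft T^{-1}y)$ and similarly for $\triangleright_\mathfrak{g}$ and the bracket. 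As the post-Leibniz axioms \eqref{post-l1}--\eqref{post-l7} are preserved under the bijection $T$, this defines a post-Leibniz algebra on $\mathfrak{g}$. The only remaining (and only computational) point is that its total bracket equals $[~,~]_\mathfrak{g}$: writing $u = T^{-1}x$, $v = T^{-1}y$, the transported total bracket is $T([u,v]_T) = [Tu, Tv]_\mathfrak{g} = [x,y]_\mathfrak{g}$, where the middle equality is precisely the defining identity \eqref{lw} with $\lambda = 1$ (equivalently, $T : \mathfrak{h}_T \rightarrow \mathfrak{g}$ is a Leibniz morphism by Remark \ref{rmk-induced-leib}, hence an isomorphism in the present invertible case). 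Thus the transported structure is compatible.

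For the \emph{only if} direction, suppose $(\mathfrak{g}, \triangleleft, \triangleright, [~,~]_\mathfrak{a})$ is a compatible post-Leibniz structure. I would take $\mathfrak{h} := (\mathfrak{g}, [~,~]_\mathfrak{a})$, which is a Leibniz algebra by axiom \eqref{post-l7}, and observe that the total Leibniz algebra $\mathfrak{g}_\mathrm{Tot} = (\mathfrak{g}, [~,~]_\star)$ is, by compatibility, exactly the given Leibniz algebra $(\mathfrak{g}, [~,~]_\mathfrak{g})$. Setting $\rho^L(x, v) = x \triangleright v$ and $\rho^R(v, x) = v \triangleleft x$, the converse construction preceding this proposition shows that $\mathfrak{h}$ becomes a Leibniz $\mathfrak{g}_\mathrm{Tot} = \mathfrak{g}$-representation and that $\mathrm{id} : \mathfrak{h} \rightarrow \mathfrak{g}$ is a $1$-weighted relative Rota-Baxter operator; it is evidently invertible. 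This supplies the required data.

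The only delicate point in the whole argument is the role of the weight being exactly $1$: it is what forces the total Leibniz bracket of the induced post-Leibniz structure to reproduce $[~,~]_\mathfrak{g}$ on the nose (rather than a rescaled bracket $\lambda[~,~]$), in both directions. The post-Leibniz axioms themselves are inherited for free — in the \emph{if} direction because $T$ is an isomorphism, and in the \emph{only if} direction because they are already verified in the converse construction — so no axiom-by-axiom verification is needed beyond what the cited results provide.
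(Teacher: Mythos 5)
Your proposal is correct and follows essentially the same route as the paper: the \emph{only if} direction invokes the identity-map construction preceding the proposition, and the \emph{if} direction transports the post-Leibniz structure of Proposition \ref{rrb-pl} along the invertible $T$ (your conjugated operations $T(T^{-1}x \triangleleft T^{-1}y)$ etc.\ are exactly the paper's explicit formulas $T(\rho^R(T^{-1}x,y))$, $T(\rho^L(x,T^{-1}y))$, $T[T^{-1}x,T^{-1}y]_\mathfrak{h}$), with compatibility checked via the defining identity \eqref{lw} at weight $1$ just as in the paper.
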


\begin{proof}
Suppose $(\mathfrak{g}, [~,~]_\mathfrak{g})$ has a compatible post-Leibniz algebra structure given by $(\mathfrak{g}, \triangleleft, \triangleright, [~,~]'_\mathfrak{g})$, i.e. the quadruple $(\mathfrak{g}, \triangleleft, \triangleright, [~,~]'_\mathfrak{g})$ is a post-Leibniz algebra and $[~,~]_\mathfrak{g} = \triangleleft + \triangleright + [~,~]'_\mathfrak{g}$. As discussed above, the Leibniz algebra $(\mathfrak{g}, [~,~]'_\mathfrak{g})$ is a Leibniz $\mathfrak{g}$-representation with left and right actions given by
\begin{align*}
\varrho^L (x,y) = x \triangleright y ~~~~ \text{ and } ~~~~ \varrho^R (x,y) = x \triangleleft y, ~ \text{ for } x, y \in \mathfrak{g}.
\end{align*}
Then the identity map $\mathrm{id} : \mathfrak{g} \rightarrow \mathfrak{g}$ (which is invertible) is a $1$-weighted relative Rota-Baxter operator. The induced Leibniz algebra structure on $\mathfrak{g}$ is given by $(\mathfrak{g}, [~,~]_\mathfrak{g})$.

Conversely, let $\mathfrak{h}$ be a Leibniz $\mathfrak{g}$-representation and $T: \mathfrak{h} \rightarrow \mathfrak{g}$ be an invertible $1$-weighted relative Rota-Baxter operator. We know from Proposition \ref{rrb-pl} that $\mathfrak{h}$ carries a post-Leibniz algebra structure. Using the invertibility of $T$, we get a post-Leibniz algebra structure on $\mathfrak{g}$ which is given by
\begin{align*}
x \triangleleft y = T (\rho^R (T^{-1} x, y)), ~~~~ x \triangleright y = T (\rho^L (x, T^{-1} y)) ~~~~ \text{ and } ~~~~ [x,y]'_\mathfrak{g} = T [T^{-1}x, T^{-1}y]_\mathfrak{h}, ~ \text{ for } x, y \in \mathfrak{g}.
\end{align*}
Moreover, we have
\begin{align*}
x \triangleleft y + x \triangleright y + [x, y]'_\mathfrak{g} =~& T \big(  \rho^R (T^{-1} x, y) + \rho^L (x, T^{-1} y) + [T^{-1}x, T^{-1}y]_\mathfrak{h}  \big) \\
=~& [TT^{-1} x, TT^{-1} y]_\mathfrak{g} = [x,y]_\mathfrak{g}.
\end{align*}
Hence $(\mathfrak{g}, \triangleleft, \triangleright, [~,~]'_\mathfrak{g})$ is a compatible post-Leibniz algebra structure for the Leibniz algebra $(\mathfrak{g}, [~,~]_\mathfrak{g})$.
\end{proof}

\medskip

\noindent {\bf Acknowledgements.} The author would like to thank IIT Kharagpur (India) for providing the beautiful academic atmosphere where the research has been carried out.

\noindent {\bf Data availability statement.} Data sharing not applicable to this article as no datasets were generated or analysed during the current study.

\medskip

%\noindent {\bf Data Availability Statement.} NA.

%\mbox{ }\\
%
%\providecommand{\bysame}{\leavevmode\hbox to3em{\hrulefill}\thinspace}
%\providecommand{\MR}{\relax\ifhmode\unskip\space\fi MR }
% \MRhref is called by the amsart/book/proc definition of \MR.
%\providecommand{\MRhref}[2]{%
 % \href{http://www.ams.org/mathscinet-getitem?mr=#1}{#2}
%}
%\providecommand{\href}[2]{#2}

\end{document}